\documentclass[11pt]{amsart}
\usepackage{}

\setlength{\textheight}{8.1in}
\setlength{\oddsidemargin}{0.6cm}
\setlength{\evensidemargin}{0.6cm}
\setlength{\textwidth}{5.8in}

\usepackage{amscd}
\usepackage{amssymb}
\usepackage{amsmath}
\usepackage{amsfonts}
\usepackage{amssymb, latexsym, amsmath, pb-diagram}
\usepackage[all,cmtip]{xy}
\usepackage{enumerate}
\usepackage{graphicx}
\usepackage{epic}
\usepackage{overpic}
\usepackage{array}
\usepackage{subfig}
\usepackage{caption}
\usepackage{varioref}

\DeclareCaptionSubType[Alph]{figure}
\def\w{\widetilde}
\def\wh{\widehat}

\newtheorem{thm}{\indent Theorem}[section]
\newtheorem{prop}[thm]{\indent Proposition}
\newtheorem{lem}[thm]{\indent Lemma}

\theoremstyle{definition}

\theoremstyle{remark}

\newtheorem{Def}[thm]{\indent \rm DEFINITION}
\newtheorem{conj}[thm]{\indent \rm CONJECTURE}

\newtheorem{cons}[thm]{\indent \rm CONSTRUCTION}
\newtheorem{rem}[thm]{\indent \rm REMARK}

\newtheorem*{ass}{\indent \rm ASSERTION}
\newtheorem{que}{\indent \rm QUESTION}[]

\setlength{\parindent}{17pt}

\numberwithin{equation}{section}
\numberwithin{table}{section}

\begin{document}
\title{Moment-angle manifolds and connected sums of sphere products}
\author[F.~Fan, L.~Chen, J.~Ma \& X.~Wang]{Feifei Fan, Liman Chen, Jun Ma and Xiangjun Wang}
\thanks{The authors are supported by NSFC grant No. 11261062 and SRFDP No.20120031110025}
\address{Feifei Fan, School of Mathematical Sciences and LPMC, Nankai University, Tianjin 300071, P.~R.~China}
\email{fanfeifei@mail.nankai.edu.cn}
\address{Liman Chen, School of Mathematical Sciences and LPMC, Nankai University, Tianjin 300071, P.~R.~China}
\email{chenlimanstar1@163.com}
\address{Jun Ma, School of Mathematical Sciences, Fudan University, Shanghai 200433, P.~R.~China}
\email{tonglunlun@gmail.com}
\address{Xiangjun Wang, School of Mathematical Sciences and LPMC, Nankai University, Tianjin 300071, P.~R.~China}
\email{xjwang@nankai.edu.cn}
\subjclass[2010]{Primary 13F55, 14M25, 55U10; Secondary 57R18, 57R19}
\date{}
\maketitle
\begin{abstract}
Corresponding to every finite simplicial complex $K$, there is a
¡°moment-angle¡± complex $\mathcal {Z}_{K}$; if $K$ is a triangulation of a sphere, $\mathcal {Z}_{K}$ is a compact manifold.
The question of whether $\mathcal {Z}_{K}$ is a connected sum of sphere products was considered in {\cite[\S 11]{BM06}}.
So far, all known examples of moment-angle manifolds which are homeomorphic to connected sums of sphere products have the property that every product is of exactly two spheres.
In this paper, we give a example whose cohomology ring is isomorphic to that of a connected sum of sphere products with one product of three spheres. We also give some general properties of this kind of moment-angle manifolds.
\end{abstract}

\section{Introduction}
Throughout this paper, we assume that $m$ is a positive integer and $[m]=\{1,2,\dots,m\}$.
For an abstract simplicial complex $K$ with $m$ vertices labeled by $[m]$ and a sequence
$I=(i_1,\dots,i_k)\subseteq[m]$ with $1\leq i_1\leq\cdots\leq i_k\leq m$,
we denote by $K_I$ the \emph{full subcomplex} of $K$ on $I$,
and $\wh I=[m]\setminus I$.
\subsection{Moment-angle complex}
Given a simple polytope $P$ with $m$ fecets, Davis and Januszkiewicz \cite{DJ91} constructed a manifold  $\mathcal{Z}_P$ with an action of a real torus $T^m$. After that Buchstaber and Panov  \cite{BP00} generalized this definition to any simplicial complex $K$, that is
\[\mathcal {Z}_{K}=\bigcup_{\sigma\in K}(D^2)^{\sigma}\times (S^1)^{[m]\setminus \sigma},\]
and named it the \emph{moment-angle complex} associated to $K$, whose study connects algebraic geometry, topology, combinatorics, and commutative algebra.
This cellular complex is always $2$-connected and
has dimension $m+n+1$, where $n$ is the dimension of $K$.

It turns out that the algebraic topology of a moment-angle complex $\mathcal {Z}_{K}$,
such as the cohomology ring and the homotopy groups is intimately related to the combinatorics of the underlying simplicial complex $K$.

\subsection{Moment-angle manifold}
Now suppose that $K$ is an $n$-dimentional simplicial sphere (a triangulation of a sphere) with $m$ vertices. Then, as shown by Buchstaber and Panov \cite{BP00},
the moment-angle complex $\mathcal {Z}_{K}$ is a manifold of dimension $n+m+1$, referred to as a \emph{momnet-angle manifold}.
In particular, if $K$ is a polytopal sphere (see Definition \ref{def:1}), or more generally a starshaped sphere (see Definition \ref{def:0}), then $\mathcal {Z}_{K}$ admits a smooth structure.
\begin{Def}\label{def:1}
A \emph{polytopal sphere} is a triangulated sphere isomorphic to the boundary complex of a simplicial polytope.
\end{Def}

\begin{Def}\label{def:0}
a simplicial sphere $K$ of dimension $n$ is said to be \emph{sastarshaped} if there is a geometric realization $|K|$ of $K$ in $\mathbb{R}^n$ and a point $p\in \mathbb{R}^n$
with the property that each ray emanating from $p$ meets $|K|$ in exactly one point.
\end{Def}

\begin{rem}
A polytopal sphere is apparently a starshaped sphere, but for $n\geq 3$, there are examples that are starshaped and not polytopal. The easiest such example is given by the \emph{Br\"uckner sphere} (see \cite{GS67}).
\end{rem}

The topology of a moment-angle manifold can be quite complicated.
The complexity increases when the dimension $n$ of the associated simplicial sphere $K$ increases.
for $n=0$, $\mathcal {Z}_{K}$ is $S^3$. For $n=1$, $K$ is the boundary of a polygon,
and $\mathcal {Z}_{K}$ is a connected sum of sphere products. In higher dimensions, the situation becomes much more complicated.
On the other hand, McGavran \cite{M79} showed that, for any $n>0$, there are infinitely
many $n$-dimensional polytopal spheres whose corresponding moment-angle manifolds are connected sums of sphere products.

\begin{thm}[McGavran, see {\cite[Theorem 6.3]{BM06}}]\label{thm:2}
Let $K$ be a polytopal sphere dual to the simple polytope obtained from the $k$-simplex by cutting off vertices for $l$ times.
Then the corresponding moment-angle manifold is homeomorphic to a connected sum of sphere products
\[\mathcal {Z}_{K}\cong \overset{l}{\underset{j=1}\#}j\binom{l+1}{j+1}S^{j+2}\times S^{2k+l-j-1}.\]
\end{thm}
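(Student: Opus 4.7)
The plan is to proceed by induction on the number of vertex truncations $l$. The base case $l=0$ gives $P = \Delta^k$ and $K = \partial\Delta^k$, for which $\mathcal{Z}_{\partial\Delta^k}$ is the classical unit sphere $S^{2k+1} \subset \mathbb{C}^{k+1}$; this matches the convention that the empty connected sum on the right-hand side is the single sphere $S^{2k+1}$ (dimensions agree: $2k + l + 1 = 2k+1$).

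For the inductive step, let $P_l$ denote the simple polytope after $l$ truncations and $K_l = \partial P_l^*$ its dual simplicial sphere. Cutting a vertex $v \in P_l$ corresponds combinatorially to a stellar subdivision of the top-dimensional simplex $\sigma_v \in K_l$ at a new interior vertex, yielding $K_{l+1}$. I would invoke the vertex-truncation surgery formula due to Bosio--Meersseman: one excises a tubular neighborhood of the canonical embedded submanifold $(D^2)^{\sigma_v} \times (S^1)^{[m]\setminus\sigma_v} \subset \mathcal{Z}_{K_l}$ and reglues the analogous piece for $K_{l+1}$ along their common boundary. In our iterated-truncation setting every vertex ever cut is simple (it lies in exactly $k$ facets of $P_l$), so $\sigma_v$ is always a $(k-1)$-simplex and the local surgery data are uniform throughout the iteration.

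Assuming by induction that $\mathcal{Z}_{K_l}$ is the stated connected sum, the effect of the surgery on the decomposition splits into two contributions: each existing summand $S^{j+2} \times S^{2k+l-j-1}$ gives a modified summand $S^{j+2} \times S^{2k+l-j}$ (the second factor gains one dimension, reflecting that $m$ has grown by one), and additional new summands $S^{j+2} \times S^{2k+l-j}$ appear from the fresh full subcomplexes containing the inserted vertex, whose link is $\partial\Delta^{k-1}$. Combining the two with Pascal's identity
\[j\binom{l+2}{j+1} \;=\; j\binom{l+1}{j+1} \;+\; j\binom{l+1}{j}\]
closes the inductive recursion: the first term on the right counts the modified old summands and the second counts the new ones, producing exactly $j\binom{l+2}{j+1}$ copies of $S^{j+2} \times S^{2k+(l+1)-j-1}$ in $\mathcal{Z}_{K_{l+1}}$, for each $j = 1, \dots, l+1$ (with the convention $\binom{l+1}{l+2} = 0$ taking care of the new index $j = l+1$).

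The main obstacle is showing rigorously that the surgery output is still a \emph{genuine} connected sum of sphere products, so that the induction hypothesis can be repropagated at each stage, and that the new summands produced by a single truncation are correctly enumerated by the link data. This is the technical heart of the Bosio--Meersseman argument, and it is precisely where the hypothesis that one starts from $\Delta^k$ is essential: it guarantees that every intermediate vertex remains simple and that the link of each $\sigma_v$ stays the boundary of a simplex, making the surgery pieces completely explicit and keeping the connected-sum structure preserved under the iteration.
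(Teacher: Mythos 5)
First, a point of comparison: the paper does not prove Theorem \ref{thm:2} at all. It is quoted as a known result of McGavran with the proof deferred to \cite[Theorem 6.3]{BM06}, so there is no internal argument to measure your proposal against and it has to stand on its own. Your overall strategy --- induction on the number $l$ of vertex cuts, base case $\mathcal{Z}_{\partial\Delta^k}\cong S^{2k+1}$, the translation of a vertex cut of $P_l$ into a stellar subdivision of the dual maximal simplex $\sigma_v$ of $K_l$, and Pascal's identity $j\binom{l+2}{j+1}=j\binom{l+1}{j+1}+j\binom{l+1}{j}$ for the bookkeeping --- is indeed the skeleton of the McGavran/Bosio--Meersseman argument, and the combinatorial side of what you write is internally consistent.

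The gap is that the entire mathematical content of the theorem lives in the inductive step, and you assert it rather than prove it. Concretely: (i) the claim that the surgery turns each old summand $S^{j+2}\times S^{2k+l-j-1}$ into $S^{j+2}\times S^{2k+l-j}$ (``the second factor gains one dimension'') is given no justification, and it is not even clearly the correct geometric statement --- a modification supported near a single subcomplex has no reason to act asymmetrically on the two factors of every product summand; the fact that the Betti numbers come out right under this bookkeeping does not identify the homeomorphism type. (ii) The ``new summands \ldots from the fresh full subcomplexes containing the inserted vertex'' conflates the Hochster additive decomposition of $H^*(\mathcal{Z}_K)$ (Theorem \ref{thm:1}) with a geometric connected-sum decomposition: full subcomplexes produce cohomology classes, not connected summands. (iii) Most seriously, to propagate the induction hypothesis you must show that the excised piece $(D^2)^{\sigma_v}\times(S^1)^{[m]\setminus\sigma_v}$ --- which is a codimension-zero submanifold with boundary, so ``tubular neighborhood'' is not the right notion; one simply removes it --- sits inside the connected-sum decomposition supplied by the previous stage in a standard, unknotted position, and that the regluing respects that decomposition. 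Without such a normalization the surgery yields at best the additive and multiplicative structure of cohomology, not a homeomorphism. You explicitly flag this as ``the technical heart of the Bosio--Meersseman argument,'' which is an accurate self-assessment: what you have written is a plausible road map with the destination assumed, not a proof.
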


For $k=2$ or $3$, the above theorem gives all moment-angle manifolds which are homomorphic to connected sums of sphere products
(see {\cite[Proposition 11.6]{BM06}}).
Nevertheless, in higher dimension they are not the only ones whose cohomology ring is isomorphic
to that of a connected sum of sphere products. Bosio and Meersseman {\cite[\S 11]{BM06}} gave many other examples of moment-angle manifolds
whose cohomology rings have this property.
We notice that all examples of connected sums of sphere products given in \cite{BM06} have the property that every product is of two spheres, this leads to a question:
\begin{que}
If $\mathcal {Z}_{K}$ is a connected sum of sphere products, is it ture that every product is of exactly two spheres?
\end{que}
In this paper (Proposition \ref{prop:1}),
we give a negative answer to this question at the aspect of cohomology rings, by constructing a $3$-dimentional polytopal
sphere, so that the cohomology ring of the corresponding moment-angle manifold is isomorphic to the cohomology ring of the connected sum of sphere products
\[S^3\times S^3\times S^6\#(8)S^5\times S^7\#(8)S^6\times S^6.\]

\section{cohomology ring of moment-angle complex}\label{sec:2}
\begin{Def}\label{def:2}
Let $K$ be a simplicial complex with vertex set $[m]$.
A \emph{missing face} of $K$ is a sequence $(i_1,\dots,i_k)\subseteq[m]$ such that $(i_1,\dots,i_k)\not\in K$,
but every proper subsequence of $(i_1,\dots,i_k)$ is a simplex of $K$. Denote by $MF(K)$ the set of all missing faces of $K$.
\end{Def}

From definition \ref{def:2}, it is easy to see that if $K_I$ is a full subcomplex of $K$, then $MF(K_I)$ is a subset of $MF(K)$. Concretely,
\[MF(K_I)=\{\sigma\in MF(K):\sigma\subseteq I\}.\]

Let $R[m]=R[v_1,\dots,v_m]$ denote the graded polynomial algebra over $R$, where $R$ is a field or $\mathbb{Z}$, $\mathrm{deg}v_i=2$.
The \emph{face ring} (also known as the \emph{Stanley-Reisner ring}) of a simplicial complex $K$ on the vertex set [m] is the quotient ring
\[R(K)=R[m]/\mathcal {I}_K,\]
where $\mathcal {I}_K$ is the ideal generated by all square free monomials $v_{i_1}v_{i_2}\cdots v_{i_s}$
such that $(i_1,\dots,i_s)\in MF(K)$.

The following result is used to calculate the cohomology ring of $\mathcal {Z}_{K}$,
which is proved by Buchstaber and Panov \cite[Theorems 7.6]{BP02} for the case over a field, \cite{BBP04} for the general case;
see also \cite[Theorem 4.7]{P08}. Another proof of Theorem \ref{thm:1} for the case over $\mathbb{Z}$ was given by Franz \cite{M06}.

\begin{thm}[Buchstaber-Panov, {\cite[Theorem 4.7]{P08}}]\label{thm:1}
Let $K$ be a abstract simplicial complex with $m$ vertices. Then the cohomology
ring of the moment-angle complex $\mathcal {Z}_{K}$ is given by the isomorphisms
\begin{align*}
H^*(\mathcal {Z}_{K};R)\cong \mathrm{Tor}_{R[m]}^{*,*}(R(K),R)\cong\bigoplus_{I\subseteq [m]} \w {H}^*(K_I;R)
\end{align*}
where
\[H^p(\mathcal {Z}_{K};R)\cong \bigoplus_{\substack{J\subseteq[m]\\-i+2|J|=p}}\mathrm{Tor}^{-i,\,2|J|}_{R[m]}(R(K),R)\]
and
\[\mathrm{Tor}^{-i,\,2|J|}_{R[m]}(R(K),R)\cong \w {H}^{|J|-i-1}(K_J;R).\]
\end{thm}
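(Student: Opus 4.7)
The plan is to realize $\mathcal{Z}_K$ as the homotopy fiber of a standard fibration over a classifying space, compute its cohomology via the Eilenberg--Moore spectral sequence, and then unpack the resulting Tor algebra using a Koszul resolution which splits by multidegree into pieces indexed by subsets $I\subseteq[m]$.

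First I would recall the Davis--Januszkiewicz space $DJ(K)=\bigcup_{\sigma\in K}(BS^1)^\sigma\times\{*\}^{[m]\setminus\sigma}\subseteq (BS^1)^m$, whose cohomology ring is (by a direct Mayer--Vietoris / colimit argument) the face ring $R(K)$. Comparing the Borel construction $ET^m\times_{T^m}\mathcal{Z}_K$ with $DJ(K)$ shows that $\mathcal{Z}_K$ is, up to homotopy, the homotopy fiber of the inclusion $DJ(K)\hookrightarrow BT^m$. The Eilenberg--Moore spectral sequence for this fibration, together with $H^*(BT^m;R)=R[m]$, yields the bigraded algebra isomorphism
\[H^*(\mathcal{Z}_K;R)\cong \mathrm{Tor}^{*,*}_{R[m]}(R(K),R),\]
and collapse at $E_2$ is ensured by the existence of a bigraded Koszul-type resolution.

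Next I would compute the Tor algebra explicitly using the Koszul resolution $R[m]\otimes\Lambda[u_1,\dots,u_m]$ of $R$ over $R[m]$ with $du_i=v_i$, $\deg v_i=(0,2)$ and $\deg u_i=(-1,2)$. Tensoring with $R(K)$ produces the commutative DGA $(R(K)\otimes\Lambda[u_1,\dots,u_m],d)$, which is $\mathbb{Z}^m$-multigraded by the exponent vector of each monomial. The differential preserves the support $J\subseteq[m]$ of this exponent vector, so the whole complex splits as a direct sum of subcomplexes indexed by $J$. Within each $J$-component one can further reduce to the squarefree part, because the non-squarefree directions in $J$ contribute acyclic Koszul factors.

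The core computation is then to identify the $J$-component, after this reduction, with a degree shift of the augmented simplicial cochain complex $\w C^{*}(K_J;R)$: concretely one matches a basis element $v^\alpha u_\beta$ (with $\mathrm{supp}(\alpha)\cup\beta=J$ and $\alpha$ squarefree) with the dual of the face $\alpha\in K_J$, so that the Koszul differential becomes, up to sign, the simplicial coboundary on $K_J$. This gives
\[\mathrm{Tor}^{-i,\,2|J|}_{R[m]}(R(K),R)\cong \w H^{|J|-i-1}(K_J;R),\]
and summing over $J\subseteq[m]$ yields the second isomorphism of the theorem.

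The hard part will be this last identification: tracking signs and bigrading conventions carefully, and, more importantly, transferring it to the ring level. For the product one must check that the cup product on Tor corresponds, under this decomposition, to the cochain-level maps $\w H^*(K_I;R)\otimes \w H^*(K_J;R)\to \w H^*(K_{I\cup J};R)$ that vanish when $I\cap J\neq\varnothing$ and are induced by the canonical inclusion of the join $K_I*K_J\hookrightarrow K_{I\cup J}$ otherwise. This requires choosing compatible chain-level representatives in the Koszul model and verifying that the multiplication on $R(K)\otimes\Lambda[u_1,\dots,u_m]$ restricts correctly on each multigraded summand.
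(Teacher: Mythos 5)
The paper gives no proof of Theorem \ref{thm:1}: it is imported verbatim from Buchstaber--Panov (\cite{BP02,BBP04,P08}) and Franz \cite{M06}, so there is no internal argument to compare yours against. Your outline does follow the standard route of those references: realizing $\mathcal{Z}_K$ as the homotopy fibre of $DJ(K)\hookrightarrow BT^m$, computing $\mathrm{Tor}_{R[m]}(R(K),R)$ from the Koszul resolution, and splitting the resulting complex by multidegree into the squarefree pieces $\w{H}^{|J|-i-1}(K_J;R)$ (Hochster's decomposition). That part of the plan is sound.

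There are, however, two genuine gaps. First, the sentence ``collapse at $E_2$ is ensured by the existence of a bigraded Koszul-type resolution'' is not an argument: a bigraded resolution is exactly what produces the bigraded $E_2$-term $\mathrm{Tor}_{R[m]}(R(K),R)$, and says nothing about the vanishing of higher differentials or about the multiplicative extension problem in passing from $E_\infty$ to $H^*(\mathcal{Z}_K)$. This is precisely the substantive content of the theorem. Moreover the statement is asserted for $R=\mathbb{Z}$, where the Eilenberg--Moore approach is delicate; the published integral proofs (\cite{BBP04}, \cite{M06}, and \cite[Lemma 3.2.6]{BP13}, quoted in the paper as Proposition 2.5) bypass the spectral sequence by exhibiting an explicit cellular cochain algebra of $\mathcal{Z}_K$ isomorphic to the quotient $A(K)$ of $\Lambda[u_1,\dots,u_m]\otimes R(K)$ and showing the projection is a quasi-isomorphism; this simultaneously settles the ring structure that you defer to ``the hard part.'' Second, a smaller but real error: the map inducing the product goes the other way. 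For $I\cap J=\emptyset$ the full subcomplex $K_{I\cup J}$ is canonically a subcomplex of the join $K_I*K_J$ (each simplex $\sigma\subseteq I\cup J$ splits as $(\sigma\cap I)\sqcup(\sigma\cap J)$), and the product $\w{H}^*(K_I)\otimes\w{H}^*(K_J)\to\w{H}^*(K_{I\cup J})$ is the join cross-product followed by restriction along $K_{I\cup J}\hookrightarrow K_I*K_J$, not along an inclusion of the join into $K_{I\cup J}$ as you wrote (cf.\ Remark \ref{rem:1}).
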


\begin{rem}\label{rem:1}
There is a canonical ring structure on $\bigoplus_{I\subseteq [m]} \w {H}^*(K_I)$
(called the \emph{Hochster ring} and denoted by $\mathcal {H}^{*,*}(K)$, where $\mathcal {H}^{i,J}(K)=\w H^i(K_J)$) given by the maps
\[\eta:\w H^{p-1}(K_I)\otimes H^{q-1}(K_J)\to H^{p+q-1}(K_{I\cup J}),\]
which are induced by the canonical simplicial inclusions $K_{I\cup J}\to K_I*K_J$ (join
of simplicial complexes) for $I\cap J=\emptyset$ and zero otherwise.
Precisely, Let $\w C^q(K)$ be the $q$th reduced simplicial cochain group of $K$.
For a oriented simplex $\sigma=(i_1,\dots,i_p)$ of $K$ (the orientation is given by the order of vertices of $\sigma$), denote by
$\sigma^*\in\w C^{p-1}(K)$ the basis cochain corresponding to $\sigma$; it takes value $1$ on $\sigma$ and vanishes on all other simplices.
Then for $I,J\in[m]$ with $I\cap J=\emptyset$, we have isomorphisms of reduced simplicial cochains
\begin{align*}
\mu :\w C^{p-1}(K_I)\otimes \w C^{q-1}(K_J)&\to \w C^{p+q-1}(K_I*K_J), \quad p,q\geq 0\\
\sigma^*\otimes\tau^*&\mapsto (\sigma\sqcup \tau)^*
\end{align*}
where $\sigma\sqcup \tau$ means the juxtaposition of $\sigma$ and $\tau$.
Given two cohomology classes $[c_1]\in \w H^{p-1}(K_I)$ and $[c_2]\in\w H^{q-1}(K_J)$, which are represented by the cocycles
$\sum_i \sigma_i^*$ and $\sum_j\tau_j^*$ respectively. Then
\[\eta([c_1]\otimes[c_2])=\varphi^*([\mu(\sum_{i,j}\sigma_i^*\otimes\tau^*_j)]),\]
where $\varphi:K_{I\cup J}\to K_I*K_J$ is the simplicial inclusion.

We denote by $\psi([c])$ the inverse image of a class $[c]\in \bigoplus_{I\subseteq [m]} \w {H}^*(K_I)$
by the composition of the two isomorphisms in Theorem \ref{thm:1}.
Given two cohomology classes $[c_1]\in \w H^{p}(K_I)$ and $[c_2]\in\w H^{q}(K_J)$, define
\[[c_1]*[c_2]=\eta([c_1]\otimes[c_2]).\]
Bosio and Meersseman proved in \cite{BM06} (see also \cite[Proposition 3.2.10]{BP13}) that, up to sign
\[\psi([c_1])\smile \psi([c_2])=\psi([c_1]*[c_2]).\]
\end{rem}

\begin{rem}\label{rem:2}
Baskakov showed in \cite{B02} (see {\cite[Theorem 5.1]{P08}}) that the isomorphisms in Theorem \ref{thm:1} are functorial with respect to simplicial maps (here we only consider simplicial inclusions). That is, for a simplicial inclusion $i:K'\hookrightarrow  K$
(suppose the vertex sets of $K'$ and $K$ are $[m']$ and $[m]$ respectively) which
induces natural inclusions
\[\phi: \mathcal{Z}_{K'}\hookrightarrow  \mathcal {Z}_{K}\]
and
\[i|_{K'_I}: K'_I\hookrightarrow  K_I,\ \text{ for each } I\subseteq [m'],\]
there is a commutative diagram of algebraic homomorphisms
\[\begin{CD}
H^*(\mathcal{Z}_{K})@>\phi^*  >>H^*(\mathcal {Z}_{K'})\\
@V\cong VV @VV\cong V\\
\underset{I\subseteq [m]}\bigoplus \w {H}^*(K_I)@>\bigoplus_I(i|_{K'_I})^* >>\underset{I\subseteq [m']}\bigoplus\w {H}^*(K'_I)
\end{CD}\]
\end{rem}
Actually, there are three ways to calculate the integral cohomology ring of a moment-angle complex $\mathcal {Z}_{K}$.

(1) The first is to calculate the Hochster ring $\mathcal {H}^{*,*}(K)$ of $K$ and apply the isomorphisms in Theorem \ref{thm:1}.

(2) The second is to calculate $\mathrm{Tor}^{*,*}_{\mathbb{Z}[m]}(\mathbb{Z}(K),\mathbb{Z})$ by means of the Koszul resolution
(\cite[Theorem 7.6 abd Theorem 7.7]{BP02}), that is
\[\mathrm{Tor}_{\mathbb{Z}[m]}^{*,*}(\mathbb{Z}(K),\mathbb{Z})\cong H(\Lambda[u_1,\dots,u_m]\otimes \mathbb{Z}(K),d),\]
where $\Lambda[u_1,\dots,u_m]$ is the exterior algebra over $\mathbb{Z}$ generated by $m$ generators. On the right side, we have
\[\mathrm{bideg}u_i=(-1,2),\quad \mathrm{bideg}v_i=(0,2),\quad du_i=v_i,\quad dv_i=0.\]
In fact, there is a simpler way to calculate the cohomology of this differential graded algebra by applying the following result
\begin{prop}[{\cite[Lemma 3.2.6]{BP13}}]
The projection homomorphism
\[\varrho: \Lambda[u_1,\dots,u_m]\otimes \mathbb{Z}(K)\to A(K)\]
induces an isomorphism in cohomology, where $A(K)$ is the quotient algebra
\[A(K)=\Lambda[u_1,\dots,u_m]\otimes \mathbb{Z}(K)/(v_i^2=u_iv_i=0,\ 1\leq i\leq m).\]
\end{prop}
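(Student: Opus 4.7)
Write $E = \Lambda[u_1,\dots,u_m]\otimes \mathbb{Z}(K)$. Since $d(u_iv_i)=v_i^2$ and $d(v_i^2)=0$, the ideal generated by $v_i^2$ and $u_iv_i$ ($1\leq i\leq m$) is a differential ideal, so $A(K)$ inherits a differential and $\varrho$ is a chain map. My plan is to prove that $\varrho$ is a quasi-isomorphism by exploiting the natural $\mathbb{Z}^m$-multigrading in which $\deg u_i = \deg v_i = e_i$: the differentials and $\varrho$ preserve this multigrading, so it is enough to treat each multidegree $a = (a_1,\dots,a_m) \in \mathbb{Z}_{\geq 0}^m$ separately. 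A monomial basis of the multigraded piece $E_a$ consists of those $u^I v^b$ with $\sum_{i \in I} e_i + b = a$ and $\mathrm{supp}(b) \in K$.

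For $a \in \{0,1\}^m$ no basis monomial of $E_a$ is divisible by any $v_i^2$ or $u_iv_i$, so $\varrho_a$ is literally an isomorphism of chain complexes. The substantive case is when some coordinate satisfies $a_{i_0} \geq 2$, in which case $A(K)_a = 0$ and I must show $E_a$ is acyclic. Let $i_0$ be the smallest index with $a_{i_0} \geq 2$ and define a degree $(-1)$ map $h\colon E_a \to E_a$ by
\[
h(u^I v^b) = \begin{cases} (-1)^{|\{j \in I \,:\, j < i_0\}|}\, u^{I\cup\{i_0\}}\, v^{b - e_{i_0}}, & i_0 \notin I, \\ 0, & i_0 \in I. \end{cases}
\]
This is well-defined because when $i_0 \notin I$ we have $b_{i_0} = a_{i_0} \geq 2$, hence $\mathrm{supp}(b - e_{i_0}) = \mathrm{supp}(b)$ remains a simplex of $K$.

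A case analysis then shows $dh + hd = \mathrm{id}_{E_a}$. When $i_0 \in I$, only the $j = i_0$ summand of $d(u^I v^b)$ survives under $h$ and the two Koszul signs combine to reproduce $u^I v^b$. When $i_0 \notin I$, the $j = i_0$ summand of $d\,h(u^I v^b)$ reproduces $u^I v^b$, while the remaining cross-terms of $dh$ and $hd$ pair off with opposite signs, handled separately according to whether $j \in I$ lies below or above $i_0$ (the difference in the positional exponents is exactly what is needed). The main technical obstacle is this sign bookkeeping, but it is a routine Koszul-style computation; once it is carried out, assembling the contributions from all multidegrees yields the desired quasi-isomorphism.
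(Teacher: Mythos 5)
Your argument is correct. Note that the paper itself gives no proof of this proposition --- it is quoted verbatim from \cite[Lemma 3.2.6]{BP13} --- so there is no internal proof to compare against; your multigraded chain-homotopy argument is essentially the standard one from Buchstaber--Panov. The two points that genuinely need checking both hold: $h$ lands in $E_a$ because $b_{i_0}=a_{i_0}\geq 2$ forces $\mathrm{supp}(b-e_{i_0})=\mathrm{supp}(b)\in K$, and in the cancellation of cross-terms the monomials $v^{b+e_j}$ and $v^{b-e_{i_0}+e_j}$ have equal supports (again since $b_{i_0}\geq 2$), so they vanish in $\mathbb{Z}(K)$ simultaneously and the pairing-off is consistent; the sign bookkeeping with $(-1)^{|\{j\in I:\,j<i_0\}|}$ works out as you claim.
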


(3) The third way is to use the Taylor resolution for $\mathbb{Z}(K)$ to calculate $\mathrm{Tor}_{\mathbb{Z}[m]}(\mathbb{Z}(K),\mathbb{Z})$.
This was introduced first by Yuzvinsky in \cite{Y99}. Wang and Zheng \cite{WZ13} applied this method to toric topology. Concretely, let
$\mathbb{P}=MF(K)$, and let $\Lambda[\mathbb{P}]$ be the exterior algebra generated by $\mathbb{P}$.
Given a monomial $u=\sigma_{k_1}\sigma_{k_2}\cdots\sigma_{k_r}$ in $\Lambda[\mathbb{P}]$, let
\[S_u=\sigma_{k_1}\cup\sigma_{k_2}\cdots\cup\sigma_{k_r}.\]
Define $\mathrm{bideg}\,u=(-r,2|S_u|)$, and define
\[\partial_i(u)=\sigma_{k_1}\cdots\wh{\sigma_{k_i}}\cdots\sigma_{k_r}=\sigma_{k_1}\cdots\sigma_{k_{i-1}}\sigma_{k_{i+1}}\cdots\sigma_{k_r}.\]

Let $(\Lambda^{*,*}[\mathbb{P}],d)$ be the cochain complex (with a different product structure from $\Lambda[\mathbb{P}]$) induced from the bi-graded exterior algebra on $\mathbb{P}$.
The differential $d: \Lambda^{-q,*}[\mathbb{P}]\to \Lambda^{-(q-1),*}[\mathbb{P}]$ is given by
\[d(u)=\sum_{i=1}^q (-1)^i\partial_i(u)\delta_i,\]
where $\delta_i=1$ if $S_u=S_{\partial_i(u)}$ and zero otherwise. The product structure in $(\Lambda^{*,*}[\mathbb{P}],d)$ is given by
\[u\times v=
\begin{cases}
u\cdot v \quad &\text{if } S_u\cap S_v=\emptyset,\\
0 &\text{otherwise,}
\end{cases}\]
where {\LARGE$\cdot$} denote the ordinary product in the exterior algebra $\Lambda[\mathbb{P}]$.
\begin{prop}[see {\cite[Theorem 2.6 and Theorem 3.2]{WZ13}}]
There is a algebraic isomorphism
\[\mathrm{Tor}_{\mathbb{Z}[m]}^{*,*}(\mathbb{Z}(K),\mathbb{Z})\cong H(\Lambda^{*,*}[\mathbb{P}],d)\]
\end{prop}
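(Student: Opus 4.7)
The plan is to derive the isomorphism from the general homological-algebra principle that $\mathrm{Tor}_{\mathbb{Z}[m]}^{*,*}(\mathbb{Z}(K),\mathbb{Z})$ is computed by tensoring any free resolution of $\mathbb{Z}(K)$ over $\mathbb{Z}[m]$ with $\mathbb{Z}$, together with the observation that the Taylor resolution $T_\bullet$ of the Stanley--Reisner ideal $\mathcal{I}_K$ produces exactly the complex $(\Lambda^{*,*}[\mathbb{P}],d)$ after tensoring down. The second assertion, that the ring structures agree, would be deduced by comparing the Taylor resolution with the Koszul resolution used to compute the same Tor as a ring.

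First I would write down the Taylor complex. For each subset $\{\sigma_{k_1},\dots,\sigma_{k_r}\}\subseteq \mathbb{P}=MF(K)$, put a free generator $u=\sigma_{k_1}\cdots\sigma_{k_r}$ of bidegree $(-r,2|S_u|)$, where $S_u=\sigma_{k_1}\cup\cdots\cup\sigma_{k_r}$ and the monomial weight of $u$ is $v^{S_u}=\prod_{j\in S_u}v_j$. The Taylor differential on $T_\bullet$ is
\[
\partial(u)\;=\;\sum_{i=1}^{r}(-1)^{i}\,\frac{v^{S_u}}{v^{S_{\partial_i u}}}\,\partial_i(u),
\]
and a standard simplicial/Koszul-style argument (or direct appeal to Taylor's classical construction) shows that $T_\bullet$ is a free $\mathbb{Z}[m]$-resolution of $\mathbb{Z}(K)$. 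Tensoring with $\mathbb{Z}=\mathbb{Z}[m]/(v_1,\dots,v_m)$ kills $v^{S_u}/v^{S_{\partial_i u}}$ unless this quotient is $1$, i.e.\ unless $S_u=S_{\partial_i u}$; what survives is precisely the $\delta_i$ in the definition of $d$. Thus $T_\bullet\otimes_{\mathbb{Z}[m]}\mathbb{Z}\cong(\Lambda^{*,*}[\mathbb{P}],d)$ as a bigraded cochain complex, and passing to cohomology gives the module isomorphism.

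For the multiplicative part, the main obstacle is that the Taylor resolution does not carry a canonical DG-algebra structure that is strictly associative on the nose. The cleanest way around this is to use the earlier Koszul model: by the cited result, $\mathrm{Tor}_{\mathbb{Z}[m]}^{*,*}(\mathbb{Z}(K),\mathbb{Z})$ is realized by the DG algebra $A(K)=\Lambda[u_1,\dots,u_m]\otimes\mathbb{Z}(K)/(v_i^2=u_iv_i=0)$. I would construct an explicit chain map $\Phi:T_\bullet\to \Lambda[u_1,\dots,u_m]\otimes\mathbb{Z}[m]$ (the Koszul resolution) by sending a Taylor generator $\sigma_{k_1}\cdots\sigma_{k_r}$ to a suitable signed sum of products $u_{j_1}\cdots u_{j_r}\otimes(\text{monomial})$ obtained by picking one vertex from each $\sigma_{k_i}$, and then verify by a direct check on representatives that the induced map on $T_\bullet\otimes\mathbb{Z}$ sends the product $u\times v$ (zero unless $S_u\cap S_v=\emptyset$, which is exactly the condition for the corresponding Koszul product to avoid the relation $u_iv_i=0$) to the cup product in $A(K)$. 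Combined with step two this produces a ring map inducing an isomorphism on cohomology.

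Thus the scheme is: acyclicity of the Taylor resolution (routine), identification of the tensored-down complex with $(\Lambda^{*,*}[\mathbb{P}],d)$ (mechanical), and multiplicative comparison with the Koszul resolution (the delicate step, where one must match signs and see that the combinatorial rule ``$u\times v=0$ unless $S_u\cap S_v=\emptyset$'' corresponds exactly to the annihilation forced by $v_i^2=u_iv_i=0$ in $A(K)$).
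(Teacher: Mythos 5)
The paper does not prove this proposition at all: it is imported verbatim from Wang--Zheng \cite{WZ13} (their Theorems 2.6 and 3.2), so there is no internal proof to compare against. Judged on its own, your additive part is sound and is the standard argument: the Taylor complex on the generators $v^\sigma$, $\sigma\in MF(K)$, of $\mathcal{I}_K$ is a free $\mathbb{Z}[m]$-resolution of $\mathbb{Z}(K)$ (the lcm of a square-free subset is $v^{S_u}$, acyclicity goes through over $\mathbb{Z}$ by the usual mapping-cone induction), and tensoring with $\mathbb{Z}$ kills exactly the terms with $S_u\neq S_{\partial_i u}$, recovering the differential $d$ with its $\delta_i$. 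That gives the isomorphism of bigraded groups.

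The multiplicative part, which is where the actual content of \cite[Theorem 3.2]{WZ13} lies, has a concrete flaw as written. You propose a chain map $\Phi\colon T_\bullet\to\Lambda[u_1,\dots,u_m]\otimes\mathbb{Z}[m]$, i.e.\ from a resolution of $\mathbb{Z}(K)$ to the Koszul resolution of $\mathbb{Z}$. Such a map necessarily covers the augmentation $\mathbb{Z}(K)\to\mathbb{Z}$, and applying $-\otimes_{\mathbb{Z}[m]}\mathbb{Z}$ to it computes the induced map $\mathrm{Tor}(\mathbb{Z}(K),\mathbb{Z})\to\mathrm{Tor}(\mathbb{Z},\mathbb{Z})=\Lambda[u_1,\dots,u_m]$, not the balancing isomorphism you need; note also that $(\Lambda[u_1,\dots,u_m]\otimes\mathbb{Z}[m])\otimes_{\mathbb{Z}[m]}\mathbb{Z}$ is $\Lambda[u_1,\dots,u_m]$, not $A(K)$, so the claim that the induced map lands in $A(K)$ does not typecheck. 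The correct comparison is a quasi-isomorphism $T_\bullet\otimes_{\mathbb{Z}[m]}\mathbb{Z}\to\Lambda[u_1,\dots,u_m]\otimes\mathbb{Z}(K)$ (equivalently, the two edge maps of the double complex $T_\bullet\otimes_{\mathbb{Z}[m]}\Lambda[u_1,\dots,u_m]\otimes\mathbb{Z}[m]$), and one must then check it is multiplicative, using that the Koszul side is a DGA resolution; the formula sending $\sigma_{k_1}\cdots\sigma_{k_r}$ to a signed sum over choices of one vertex from each $\sigma_{k_i}$ is the right shape for this corrected target, but the sign bookkeeping and the verification that the rule $u\times v=0$ unless $S_u\cap S_v=\emptyset$ matches the relations $v_i^2=u_iv_i=0$ is exactly the nontrivial step, and you leave it entirely to a ``direct check.'' So: additive statement established, multiplicative statement not established and the proposed mechanism for it needs repair.
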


\section{Construction of a polytopal $3$-sphere with eight vertices}
In this section, we construct a $3$-dimensional polytopal sphere $K$ with eight vertices,
such that the cohomology ring of the correponding moment-angle manifold $\mathcal {Z}_{K}$
is isomorphic to the the cohomology ring of a connected sum of sphere products with one product of three spheres.
\begin{cons}\label{cons:1}
We construct $K$ by three steps. First give a $2$-dimensional simplicial complex $K_0$ with $4$ vertices shown in Figure \ref{fig:1}.
\[MF(K_0)=\{(1,2,3), (1,3,4)\}.\] It has two subcomplex
$K_1$ and $K_2$ also shown in Figure \ref{fig:1}. Next let $L_1=K_0\cup \mathrm{cone}(K_1)$ with a new vertex $5$.
(i.e., $L_1$ is the mapping cone of the inclusion map $K_1\hookrightarrow K_0$), and let $L_2=K_0\cup \mathrm{cone}(K_2)$ with a new vertex $6$.
Let $K_0'=L_1\cup L_2$ be a simplicial complex obtained by gluing $L_1$ and $L_2$ along $K_0$ (see Figure \ref{fig:2}). Then
\[MF(K_0')=\{(1,2,3), (1,3,4), (2,3,5), (3,4,6), (5,6)\}.\]
Note that $K_0'$ can be viewed as a "thick" $2$-sphere with two $3$-simplices $(1,2,4,5)$ and $(1,2,4,6)$,
shown shaded in Figure \ref{fig:2}.
$K_0'$ has two subcomplexes $K_1'$ and $K_2'$ (see Figure \ref{fig:2}),
which are all triangulations of $S^2$. Let $\mathrm{cone}(K_2')$ be the cone of $K_2'$ with a new vertex $8$.
Then it is easy to see that $K'=K_0'\cup \mathrm{cone}(K_2')$ is a triangulation of $D^3$ and its boundary
is $K_1'$. Finally, let $K=K'\cup \mathrm{cone}(K_1')$ with a new vertex $7$.
Clearly, $K$ is a triangulation of $S^3$, and the missing faces of $K$ are
\begin{equation}\label{eq:0}
\begin{split}
MF(K)=\{&(1,2,3),(1,3,4),(2,3,5),(3,4,6),(5,6),\\
&(1,4,7),(4,6,7),(1,2,8),(2,5,8),(7,8)\}.
\end{split}
\end{equation}
\begin{figure}[!ht]
\captionsetup[subfloat]{labelformat=empty,textfont=normalsize}
\centering
\subfloat[$K_0$]{\includegraphics[scale=0.35]{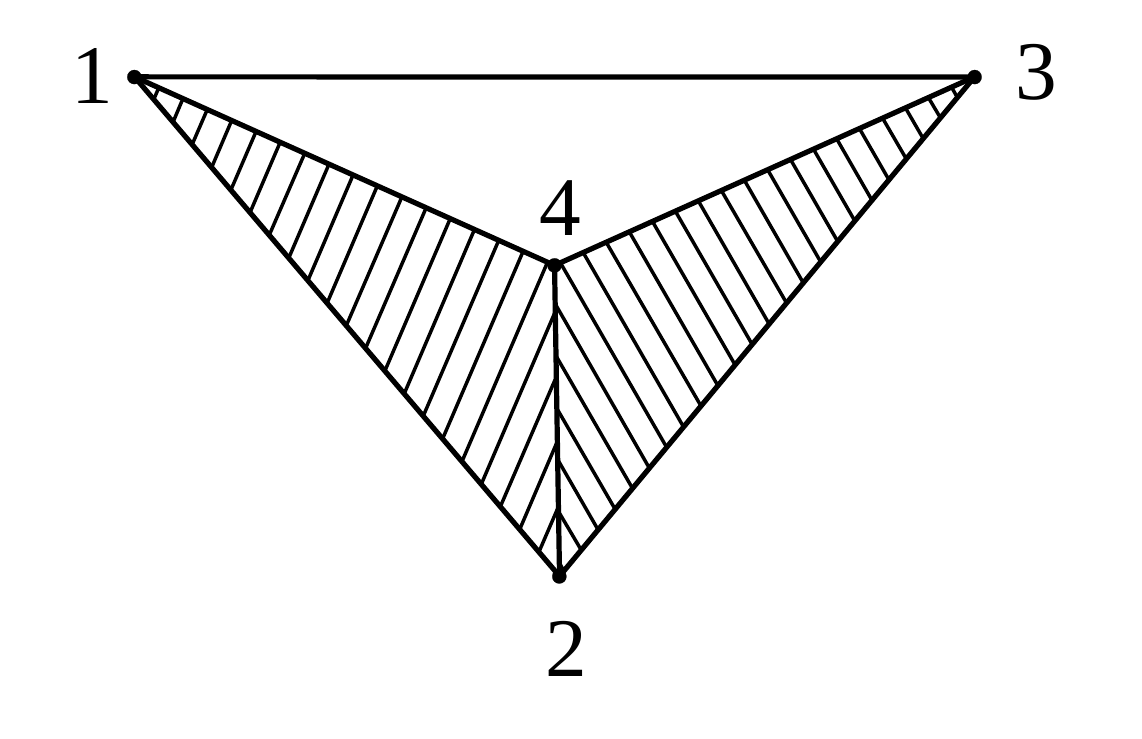}}
\subfloat[$K_1$]{\includegraphics[scale=0.35]{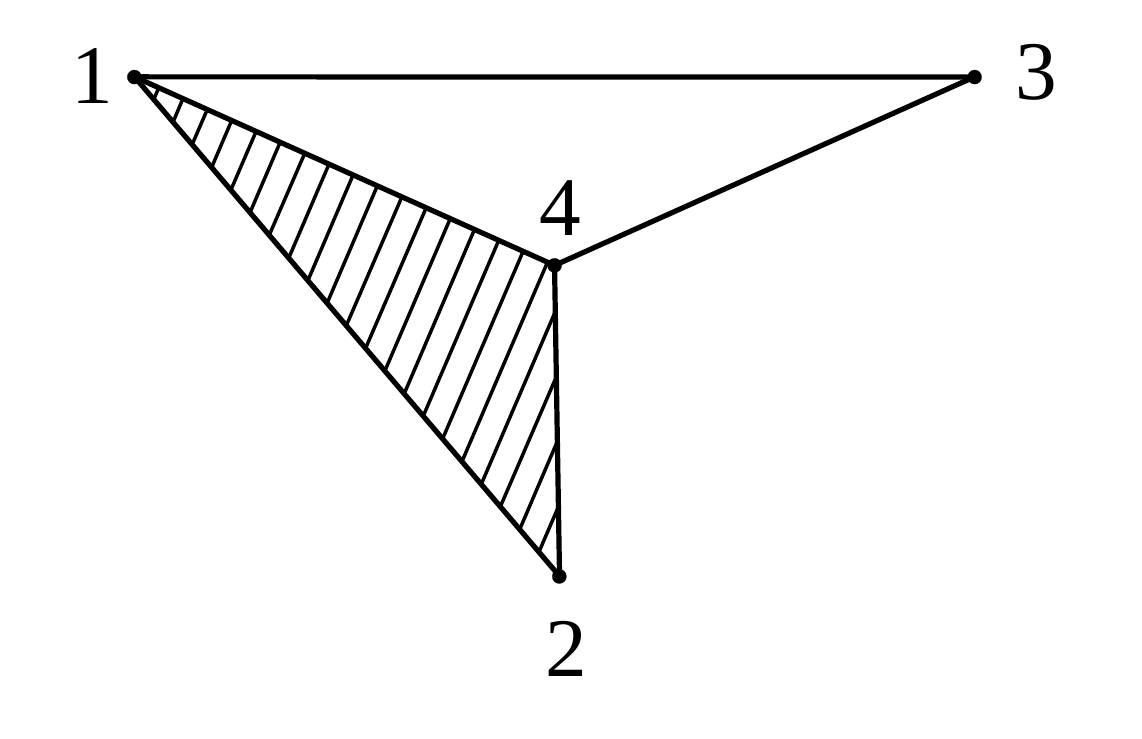}}
\subfloat[$K_2$]{\includegraphics[scale=0.35]{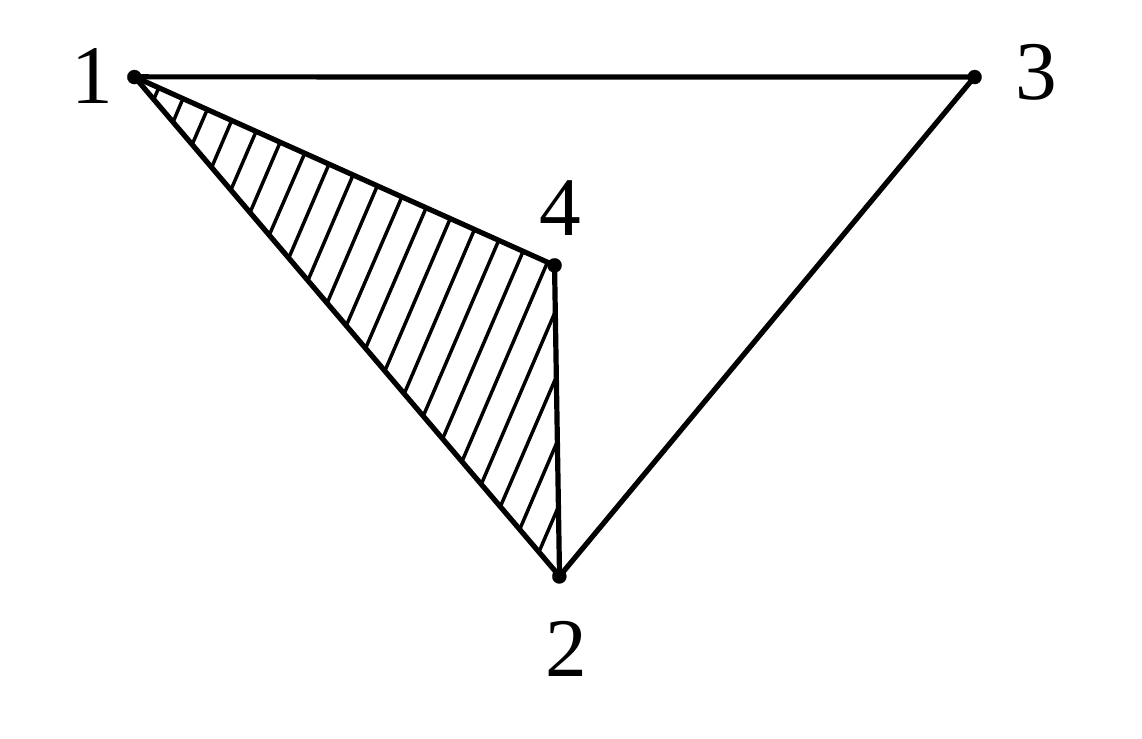}}
\caption{$K_0$, $K_1$ and $K_2$.}\label{fig:1}
\end{figure}
\begin{figure}[!ht]
\captionsetup[subfloat]{labelformat=empty,textfont=normalsize}
\centering
\subfloat[$K_0'$]{\includegraphics[scale=0.4]{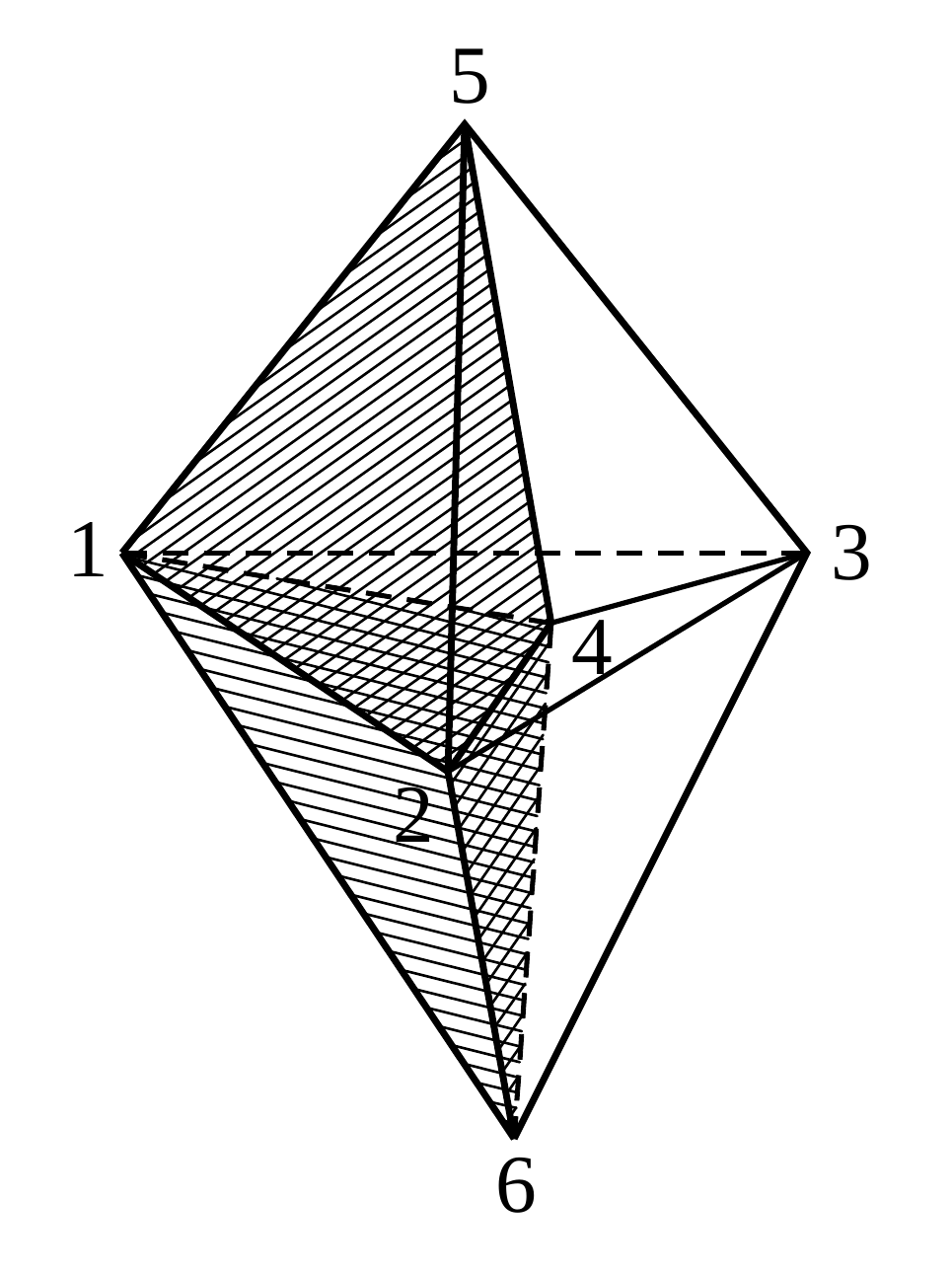}}
\subfloat[$K'_1$]{\includegraphics[scale=0.4]{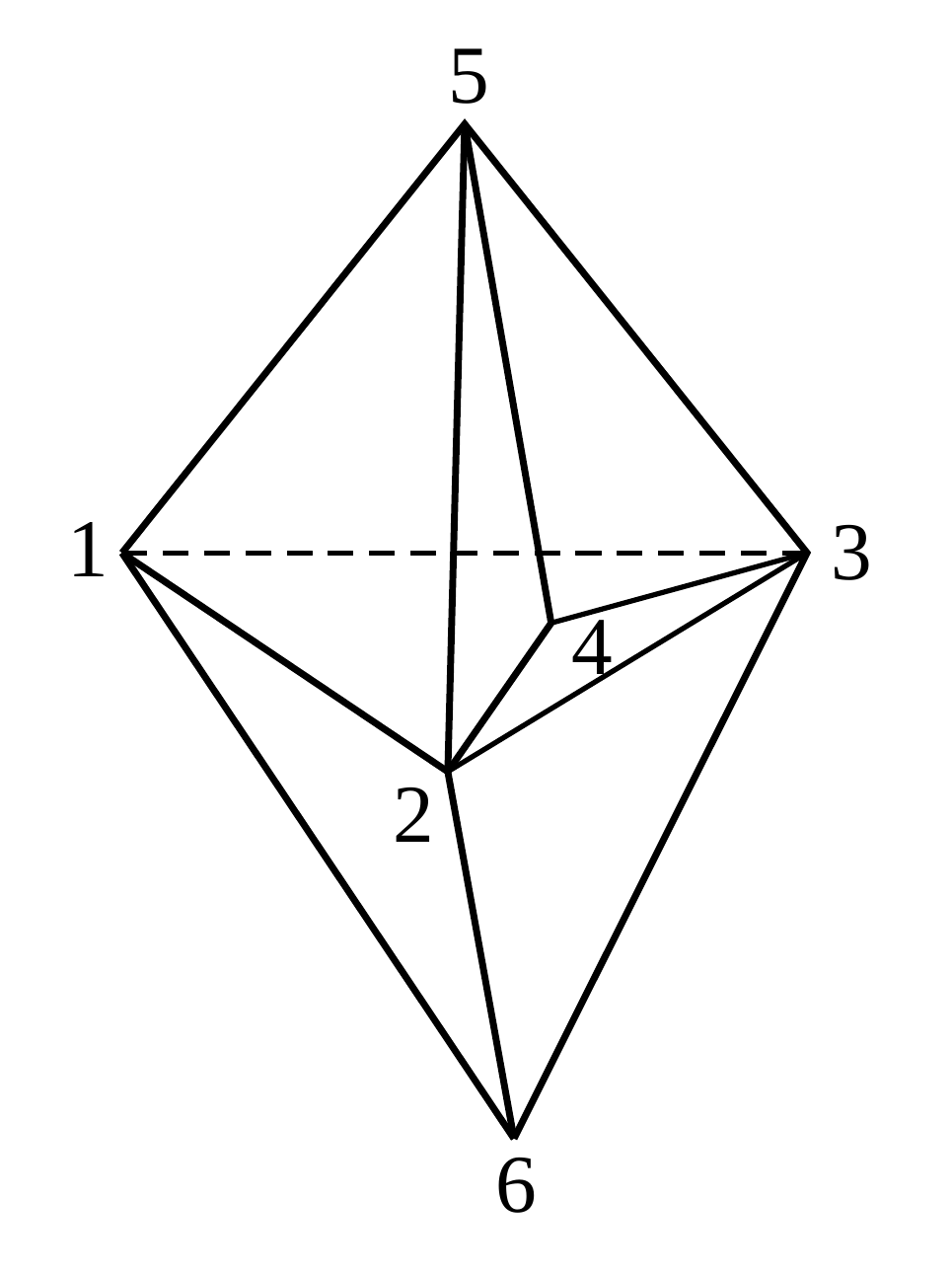}}
\subfloat[$K'_2$]{\includegraphics[scale=0.4]{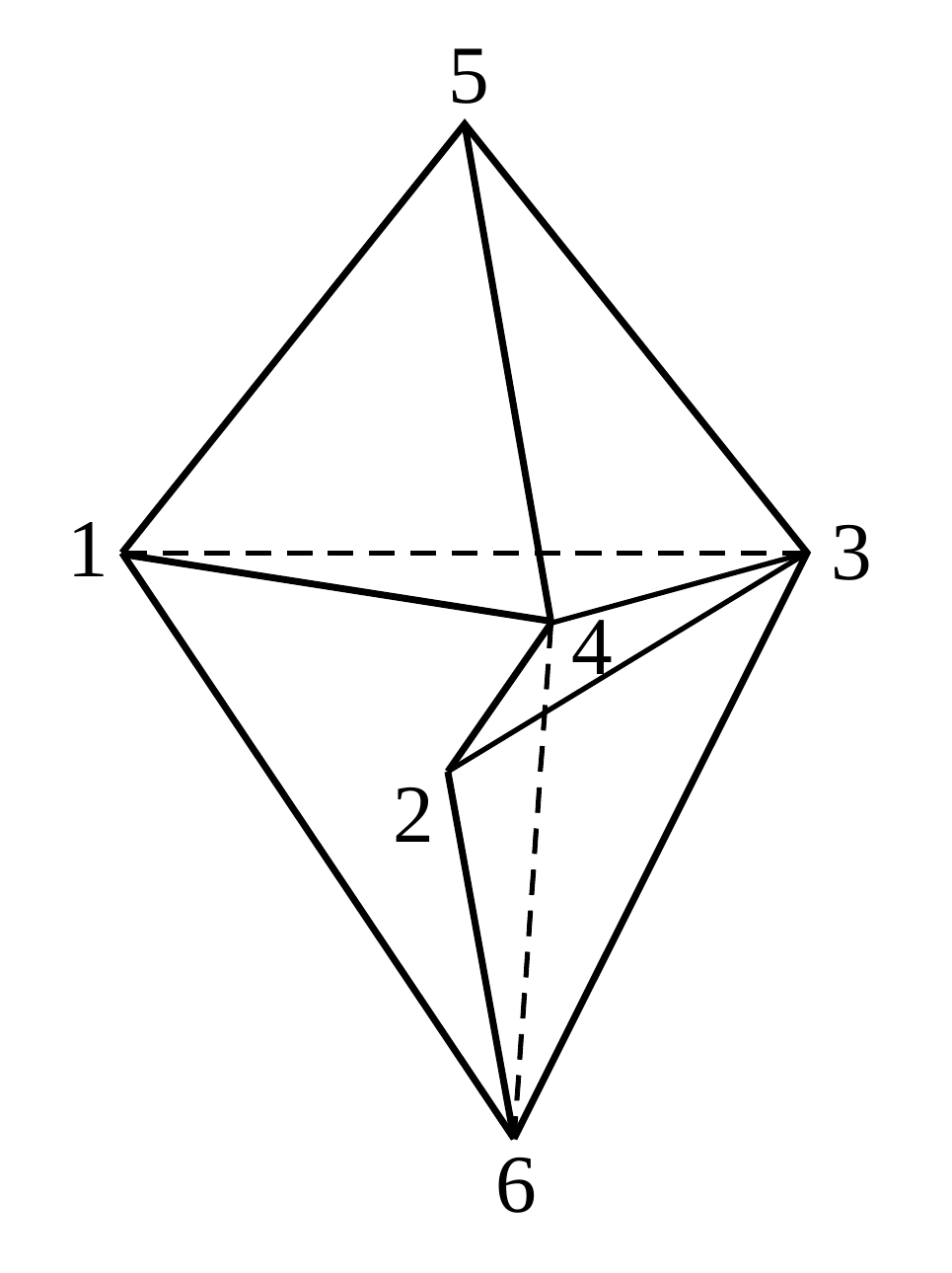}}
\caption{$K_0'$, $K'_1$ and $K'_2$.}\label{fig:2}
\end{figure}
\end{cons}

Gr\"unbaum and Sreedharan \cite{GS67}
gave a complete enumeration of the simplicial 4-polytopes with 8 vertices. A direct verification shows that $K$ we construct above
is isomorphic to the boundary of $P_{28}^8$ (a 4-polytope with $18$ facets) in \cite{GS67}.
Then $K$ is actually a polytopal sphere. From the construction we know that all $3$-simplices of $K$ are
\[\begin{split}
&(1,2,4,5),\ (1,2,4,6),\ (1,2,5,7),\ (1,2,6,7),\ (1,3,5,7),\ (1,3,6,7)\\
&(2,3,4,7),\ (2,3,6,7),\ (2,4,5,7),\ (3,4,5,7),\ (1,4,5,8),\ (1,4,6,8)\\
&(1,3,5,8),\ (1,3,6,8),\ (2,3,4,8),\ (2,3,6,8),\ (2,4,6,8),\ (3,4,5,8).
\end{split}\]

\section{connected sums of sphere products}
In the first part of this section, we calculate the cohomology ring of $\mathcal {Z}_K$ corresponding to the polytopal sphere $K$
constructed in the last section. In the second part, we give some general properties for the moment-angle manifolds whose cohomology ring is isomorphic to
that of a connected sum of sphere product.
\labelformat{subfigure}{#1}
\begin{figure}
\captionsetup[subfloat]{labelformat=simple,textfont=normalsize,labelfont=normalsize}
\centering
\subfloat[\label{subf:1}]{\includegraphics[scale=0.22]{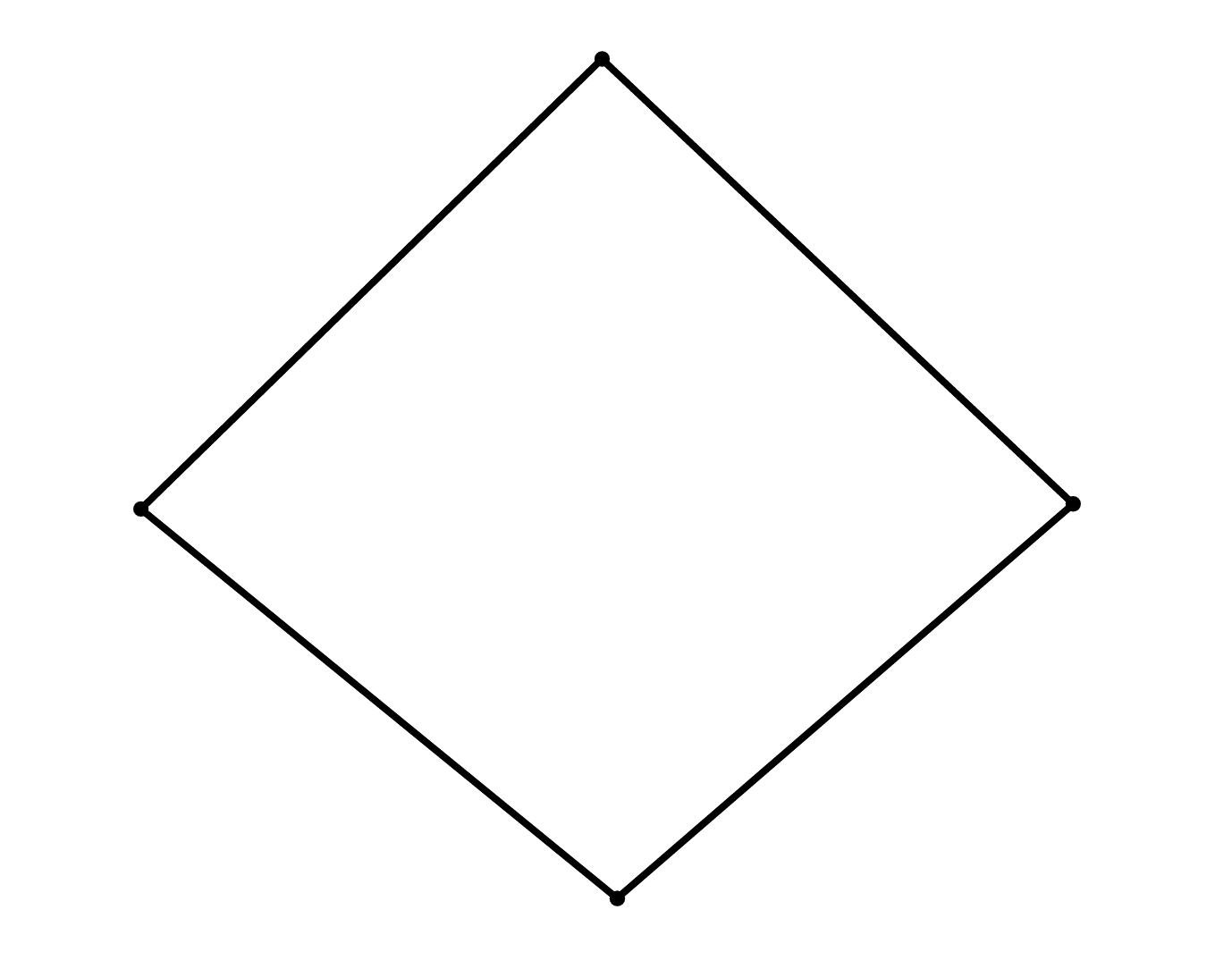}}\
\subfloat[\label{subf:4}]{\includegraphics[scale=0.32]{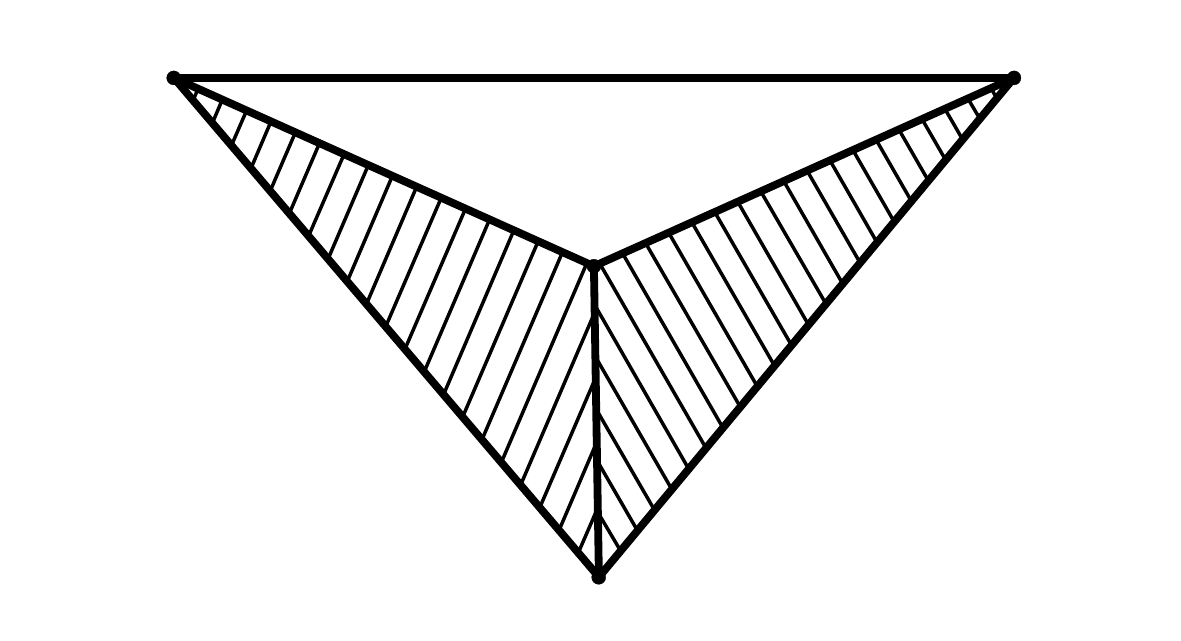}}\
\subfloat[\label{subf:5}]{\includegraphics[scale=0.32]{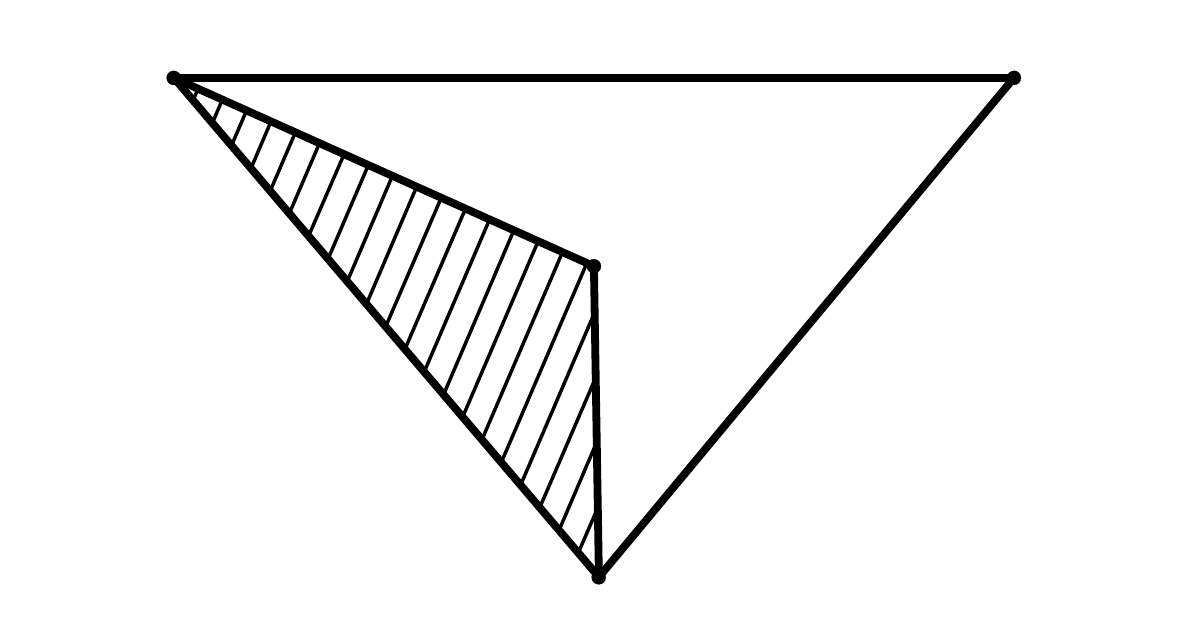}}
\caption{}\label{fig:3}
\end{figure}

\begin{prop}\label{prop:1}
For the polytopal sphere $K$ defined in Construction \ref{cons:1}, the cohomology ring of the corresponding moment-angle manifold
$\mathcal {Z}_{K}$ is isomorphic to the cohomology ring of \[S^3\times S^3\times S^6\#(8)S^5\times S^7\#(8)S^6\times S^6\].
\end{prop}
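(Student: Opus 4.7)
The plan is to use Theorem \ref{thm:1} and Remark \ref{rem:1} to compute $H^*(\mathcal{Z}_K)$ as a ring via the Hochster decomposition $\bigoplus_{J\subseteq[8]}\w H^*(K_J)$, and then match the resulting ring against the cohomology ring of the stated connected sum. Both sides have dimension $n+m+1=12$, and the target Poincar\'e polynomial is $1+2t^3+8t^5+18t^6+8t^7+2t^9+t^{12}$.

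For the additive structure I would enumerate the subsets $J\subseteq[8]$ contributing non-trivially to $\w H^*(K_J)$. Since the only missing edges of $K$ are $(5,6)$ and $(7,8)$, exactly two size-$2$ subsets give $\w H^0\cong\mathbb{Z}$, yielding the two generators of $H^3$, and no triple is disconnected. Each of the eight missing triangles in (\ref{eq:0}) gives $K_J\simeq S^1$ with $\w H^1=\mathbb{Z}$, contributing $\mathbb{Z}^8$ to $H^5$; Alexander duality $\w H_i(K_J)\cong\w H^{2-i}(K_{\wh J})$ dualises these to the required $\mathbb{Z}^8$ in $H^7$ and $\mathbb{Z}^2$ in $H^9$, and also forces all $H^6$-contributions to come from $\w H^1(K_J)$ with $|J|=4$. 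By inspection, $K_{\{1,2,3,4\}}=K_0$ (four vertices, six edges, triangles $(1,2,4)$ and $(2,3,4)$) has $\w H^1\cong\mathbb{Z}$ and $K_{\{5,6,7,8\}}$ is the $4$-cycle $5\!-\!7\!-\!6\!-\!8\!-\!5$; a direct enumeration produces exactly $8$ further Alexander-dual pairs of size-$4$ subsets with $\w H^1=\mathbb{Z}$, giving total rank $2+16=18$ in $H^6$. Finally, $K_{[8]}=K\cong S^3$ gives the top class in $H^{12}$.

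For the multiplicative structure I would compute the Hochster product $\eta$ of Remark \ref{rem:1} on generators. The new and essential feature that distinguishes this example from the two-fold products of \cite{BM06} is a non-zero triple product. Taking $a\in\w H^0(K_{\{5,6\}})$ and $b\in\w H^0(K_{\{7,8\}})$ to represent the two $H^3$-generators, and $c=[1,3]^*\in\w H^1(K_0)$ to represent the distinguished $H^6$-generator, one first checks that $a\cdot b$ is represented on the $4$-cycle $K_{\{5,6,7,8\}}$ by the cochain $[5,7]^*$ and so generates $\w H^1(K_{\{5,6,7,8\}})\cong\mathbb{Z}$. Then $(a\cdot b)\cdot c$ is represented, via the simplicial inclusion $K_{[8]}\hookrightarrow K_{\{5,6,7,8\}}\ast K_{\{1,2,3,4\}}$, by $\pm[1,3,5,7]^*$ on the $3$-simplex $(1,3,5,7)\in K$ (which appears in the list of $18$ top simplices at the end of Section \ref{sec:2}); since every single top-simplex cochain represents $\pm$ the fundamental class of $\w H^3(S^3)\cong\mathbb{Z}$, the triple product $a\cdot b\cdot c$ is a generator of $H^{12}(\mathcal{Z}_K)$. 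This exhibits the $S^3\times S^3\times S^6$-summand.

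The remaining non-zero pairings are forced by Poincar\'e duality together with the fact that the Hochster product of classes on $K_I$ and $K_J$ vanishes whenever $I\cap J\neq\emptyset$. For each missing triangle $\sigma$, the class in $\w H^1(K_\sigma)\subseteq H^5$ pairs only with its Alexander dual in $\w H^1(K_{\wh\sigma})\subseteq H^7$, giving the eight $S^5\times S^7$-summands; and each of the remaining eight Alexander-dual pairs of size-$4$ subsets gives an $H^6\otimes H^6\to H^{12}$ pairing equal to the fundamental class, yielding the eight $S^6\times S^6$-summands. All other products vanish because either the relevant index sets meet or the target cohomology group is zero. The main obstacle is the cochain-level verification of the triple product non-vanishing; once that single new phenomenon is established, Poincar\'e duality and the rank counts force the remaining ring structure to match the connected sum.
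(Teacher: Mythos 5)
Your overall route is the same as the paper's: compute the Hochster ring $\bigoplus_{J\subseteq[8]}\w H^*(K_J)$, use Alexander duality to organize the additive answer, and detect the $S^3\times S^3\times S^6$ summand from the fact that $K_{\{5,6,7,8\}}=K_{\{5,6\}}*K_{\{7,8\}}$ is a $4$-cycle, so that the product of the two $H^3$-classes is already a generator in $\w H^1(K_{\{5,6,7,8\}})$ and then multiplies onto the top class. Your cochain-level verification via $[1,3,5,7]^*$ is a correct (and slightly more explicit) version of the paper's equations for $\psi(a_1)\smile\psi(a_2)=\pm\psi(\alpha_0')$ and $\psi(a_1)\smile\psi(a_2)\smile\psi(\alpha_0)=\psi(\xi)$, and the additive count ($2,8,18,8,2,1$ in degrees $3,5,6,7,9,12$) matches the paper's Table.

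There is, however, one genuine misstatement at the end. After listing the $H^5\otimes H^7$ and $H^6\otimes H^6$ pairings you assert that ``all other products vanish because either the relevant index sets meet or the target cohomology group is zero.'' This is false for the degree-$(3,6)$ products: with $a_1\in\w H^0(K_{\{5,6\}})$ and $\alpha_0\in\w H^1(K_{\{1,2,3,4\}})$ the index sets are disjoint and the target $\w H^2(K_{\{1,2,3,4,5,6\}})\cong\mathbb{Z}$ is nonzero; in fact $\psi(a_i)\smile\psi(\alpha_0)=\pm\psi(\lambda_j)$ is a generator of a $\mathbb{Z}$-summand of $H^9$. These products are not optional: in $S^3\times S^3\times S^6$ the $H^3$-generators multiply nontrivially into $H^9$ against the $S^6$-class, so a ring in which they vanished would \emph{not} be isomorphic to the target. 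The paper computes them explicitly (its formulas $\psi(a_2)\smile\psi(\alpha_0)=\psi(\lambda_1)$, $\psi(a_1)\smile\psi(\alpha_0)=-\psi(\lambda_2)$), together with the vanishing $\psi(a_i)\smile\psi(\alpha_j)=0$ for $j\geq1$ and $\psi(a_i)\smile\psi(\alpha_j')=0$. In your setup the nonvanishing does follow from associativity applied to your triple product ($a\smile(b\smile c)\neq0$ forces $b\smile c\neq0$, and Poincar\'e duality in degree $(3,9)$ then pins it to a generator), so the gap is repairable from what you already proved --- but as written your final sentence contradicts it, and you should also record the $H^3\otimes H^9\to H^{12}$ pairing $\psi(a_i)\smile\psi(\lambda_i)=\psi(\xi)$ to complete the identification of the ring.
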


We will calculate $H^*(\mathcal {Z}_{K})$ in the first way introduced in section \ref{sec:2}. Therefore we need first to calculate the reduced cohomology rings of
all full subcomplexes of $K$. Note first the following obvious fact: Let $\Gamma$ be a simplicial complex with vertex set $[m]$. Define
$\mathcal {I}=\bigcup_{\sigma\in MF(\Gamma)}\sigma.$
If $\mathcal {I}\neq [m]$, then $\Gamma=K_{\mathcal {I}}*\Delta^{m-|\mathcal {I}|-1}$, and therefore $\Gamma$ is contractible.

Now we do this work in $6$ cases according to the cardinality of $I$ for $K_I$.
\begin{enumerate}[(1)]
\item Since the case $|I|=1$ is trivial, we start with the case $|I|=2$.
In this case, from \eqref{eq:0}, it is easy to see that $\w {H}^*(K_I)\neq 0$ if and only if $I=(5,6)$ or $(7,8)$,
and if so, $\w {H}^*(K_I)\cong \w {H}^0(K_I)\cong \mathbb{Z}$. Denote by $a_1$ (respectively $a_2$) a generator of $\w {H}^0(K_I)$
for $I=(5,6)$ (respectively $(7,8)$).

\item \label{case:2} $|I|=3$. It is easy to see that the union of any two missing faces of $K$ contains at least four vertices.
Combining the preceding argument we have that $\w {H}^*(K_I)\neq 0$
if and only if $I$ is one of the eight missing faces with three vertices in $MF(K)$,
and if so, $\w {H}^*(K_I)\cong \w {H}^1(K_I)\cong \mathbb{Z}$, whose generator we denote by $b_i$ ($1\leq i\leq 8$).

\item $|I|=4$. An easy observtion shows that the union of any three missing faces of $K$ contains at least five vertices,
and $K$ has no missing face with four vertices.
So if $K_I$ is not contractible, then it has exactly two missing faces.
Thus from \eqref{eq:0}, the form of $MF(K_I)$ is one of $\{(v_1,v_2),(v_3,v_4)\}$, $\{(v_1,v_2,v_3),(v_2,v_3,v_4)\}$ and $\{(v_1,v_2),(v_1,v_3,v_4)\}$ , for which
the corresponding simplicial complexes are respectively $A$, $B$ and $C$ shown in Figure \ref{fig:3}.
It is easy to see that they are all homotopic to $S^1$. In Table \ref{tab:1} we list all non-contractible full subcomplexes $K_I$ of $K$ for $|I|=4$
(each $I_j$ contains vertex 1). Denote by $\alpha_j$ (respectively $\alpha_j'$) a generator of
$\w {H}^*(K_{I_j})\cong \w {H}^1(K_{I_j})\cong \mathbb{Z}$ (respectively $\w {H}^*(K_{\wh{I_j}})$) for $0\leq j\leq 8$.

\begin{table}\renewcommand{\arraystretch}{1.5}
\begin{tabular}{c|c|c|c|c|c|c}\hline
&$K_{I_0}$&$K_{\wh{I_0}}$&$K_{I_1}$&$K_{\wh{I_1}}$&$K_{I_2}$&$K_{\wh{I_2}}$\\ \hline
vertex set&$\{1,2,3,4\}$&$\{5,6,7,8\}$&$\{1,2,3,5\}$&$\{4,6,7,8\}$&$\{1,2,3,8\}$&$\{4,5,6,7\}$\\ \hline
\shortstack{missing\\faces}&$\shortstack{(1,2,3)\\(1,3,4)}$&$\shortstack{(5,6)\\(7,8)}$&$\shortstack{(1,2,3)\\(2,3,5)}$
&$\shortstack{(4,6,7)\\(7,8)}$&$\shortstack{(1,2,3)\\(1,2,8)}$&$\shortstack{(4,6,7)\\(5,6)}$\\ \hline
\end{tabular}\vspace{10pt}
\begin{tabular}{c|c|c|c|c|c}\hline
$K_{I_3}$&$K_{\wh{I_3}}$&$K_{I_4}$&$K_{\wh{I_4}}$&$K_{I_5}$&$K_{\wh{I_5}}$\\ \hline
$\{1,2,5,8\}$&$\{3,4,6,7\}$&$\{1,2,7,8\}$&$\{3,4,5,6\}$&$\{1,3,4,6\}$&$\{2,5,7,8\}$\\ \hline
$\shortstack{(1,2,8)\\(2,5,8)}$&$\shortstack{(3,4,6)\\(4,6,7)}$&$\shortstack{(1,2,8)\\(7,8)}$
&$\shortstack{(3,4,6)\\(5,6)}$&$\shortstack{(1,3,4)\\(3,4,6)}$&$\shortstack{(2,5,8)\\(7,8)}$\\ \hline
\end{tabular}\vspace{10pt}
\begin{tabular}{c|c|c|c|c|c}\hline
$K_{I_6}$&$K_{\wh{I_6}}$&$K_{I_7}$&$K_{\wh{I_7}}$&$K_{I_8}$&$K_{\wh{I_8}}$\\ \hline
$\{1,3,4,7\}$&$\{2,5,6,8\}$&$\{1,4,6,7\}$&$\{2,3,5,8\}$&$\{1,4,7,8\}$&$\{2,3,5,6\}$\\ \hline
$\shortstack{(1,3,4)\\(1,4,7)}$&$\shortstack{(2,5,8)\\(5,6)}$&$\shortstack{(1,4,7)\\(4,6,7)}$
&$\shortstack{(2,3,5)\\(2,5,8)}$&$\shortstack{(1,4,7)\\(7,8)}$&$\shortstack{(2,3,5)\\(5,6)}$\\ \hline
\end{tabular}\vspace{5pt}\caption{Non-contractible full subcomplexes of $K$ with four vertices.}\label{tab:1}
\end{table}

\item\label{case:4} $|I|=5$. We need to use the following well known fact: Let $\Gamma$ be a simplicial complex on [m],
$\Gamma_J$ a full subcomplex on $J\subseteq [m]$. Then $\Gamma_{\wh{J}}$ is a deformation retract of $\Gamma\setminus\Gamma_J$.
From this and Alexander duality on $K$ we have that $\w {H}^j(K_I)\cong \w {H}_{2-j}(K_{\wh{I}})$. Since $|I|=5$, $|\wh I|=3$.
From the arguments in case \eqref{case:2}, $H_*(K_{\wh{I}})$ are all torsion free, so $\w H^*(K_{\wh I})\cong \w H_*(K_{\wh I})$.
Thus $\w H^*(K_I)$ is non-trivial if and only if $\wh I$ is one of the eight missing faces with three vertice,
and if so, $\w {H}^*(K_I)\cong \w {H}^1(K_I)\cong \mathbb{Z}$, whose generator we denote by $\beta_i$ ($1\leq i\leq 8$).

\item\label{case:5} $|I|=6$.
The same argument as in \eqref{case:4} shows that $\w H^*(K_I)$ is non-trivial if and only if $\wh I$ is $(5,6)$ or $(7,8)$,
and if so, $\w {H}^*(K_I)\cong \w {H}^2(K_I)\cong \mathbb{Z}$. Denote by $\lambda_1$ (respectively $\lambda_2$)
a generator of $\w {H}^2(K_I)$ for $\wh I=(5,6)$ (respectively $(7,8)$).

\item $|I|\geq 7$. If $|I|=7$, $\w H^*(K_I)=0$ is clear.  If $|I|=8$, $K_I=K$, so $\w H^*(K)\cong H^3(K)\cong\mathbb{Z}$. Denote by
$\xi$ a generator of it.
\end{enumerate}

\begin{proof}[Proof of Proposition \ref{prop:1}]
Theorem \ref{thm:1} and the preceding arguments give the cohomology group of $\mathcal {Z}_{K}$
\begin{center}
\renewcommand{\arraystretch}{1.5}
\begin{tabular}{|c|c|}\hline
$i$&$\w H^i(\mathcal {Z}_{K})\cong$\\ \hline
$1,2,4,8,10,11$&$0$\\ \hline
$3$&$\mathbb{Z}\cdot\psi(a_1)\oplus\mathbb{Z}\cdot\psi(a_2)$\\ \hline
$5$&$\bigoplus_{1\leq i\leq 8}\mathbb{Z}\cdot\psi(b_i)$\\ \hline
$6$&$\bigoplus_{0\leq i\leq 8}\big(\mathbb{Z}\cdot\psi(\alpha_i)\oplus \mathbb{Z}\cdot\psi(\alpha_i')\big)$\\ \hline
$7$&$\bigoplus_{1\leq i\leq 8}\mathbb{Z}\cdot\psi(\beta_i)$\\ \hline
$9$&$\mathbb{Z}\cdot\psi(\lambda_1)\oplus\mathbb{Z}\cdot\psi(\lambda_2)$\\ \hline
$12$&$\mathbb{Z}\cdot\psi(\xi)$\\ \hline
\end{tabular}
\end{center}

Now we give the cup product structure of $H^*(\mathcal {Z}_{K})$. First by Poincar\'e duality on $\mathcal {Z}_{K}$ and Remark \ref{rem:1}, up to sign
\begin{align}
\psi(a_i)\smile \psi(\lambda_i)&=\psi(\xi),\quad i=1,2;\label{eq:2}\\
\psi(b_i)\smile \psi(\beta_i)&=\psi(\xi),\quad i=1,\dots,8;\\
\psi(\alpha_i')\smile \psi(\alpha_i)&=\psi(\xi),\quad i=0,\dots,8.
\end{align}
Note that $K_{\wh {I_0}}=K_{(5,6)}*K_{(7,8)}$,
so up to sign $\psi(a_1)\smile \psi(a_2)=\psi(\alpha_0')$ (see Remark \ref{rem:1}), and so
\begin{equation}\label{eq:1}
\psi(a_1)\smile \psi(a_2)\smile \psi(\alpha_0)=\psi(\xi)
\end{equation}
Since $a_2*\alpha_0\in\w H^*(K_{\wh{(5,6)}})$,
$\psi(a_2)\smile \psi(\alpha_0)=\psi(a_2*\alpha_0)=p\cdot\psi(\lambda_1)$ for some $p\in \mathbb{Z}$.
From fomulae \eqref{eq:2} and \eqref{eq:1} we have $p=1$. Similarly,
$\psi(a_1)\smile \psi(\alpha_0)=-\psi(\lambda_2)$. Moreover from the arguments in case \eqref{case:5}, we have that
$\psi(a_i)\smile \psi(\alpha_j)=0$ for $1\leq j\leq 8$, and $\psi(a_i)\smile \psi(\alpha_j')=0$ for $0\leq j\leq 8$; $i=1,2$.
By an observation on the dimension of the non-trivial cohomology groups of $\mathcal {Z}_{K}$,
it is easy to verify that any other products between these generators are trivial.
Combining all the product relations above we get the desired result.
\end{proof}
There are other two different polytopal spheres from $K$
(corresponding to the two $4$-polytopes $P_{27}^8$ and $P_{29}^8$ in \cite{GS67}), so that
the corresponding moment-angle manifolds have the same cohomology rings as $\mathcal {Z}_{K}$.
The proof of this is the same as Proposition \ref{prop:1}.

For a moment-angle manifold corresponding to a simplicial $2$-sphere, if its cohomology ring is isomorphic to the one of a connected sum of
sphere products, then it is actually diffeomorphic to this connected sum of sphere products (\cite{BM06}, Proposition 11.6).
This leads to the following conjecture:
\begin{conj}
$\mathcal {Z}_{K}$ is diffeomorphic to the connected sum of sphere products in Proposition \ref{prop:1}.
\end{conj}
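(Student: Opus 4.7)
The plan is to lift the cohomology-ring isomorphism of Proposition \ref{prop:1} to a smooth equivalence by exploiting the inductive structure of $K$ from Construction \ref{cons:1}, with Kreck-style surgery theory held in reserve as a backup. Since $\mathcal{Z}_K$ is a $12$-dimensional $2$-connected smooth manifold, both approaches sit in the favourable high-dimensional simply connected category.

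The direct route would exploit Construction \ref{cons:1}, which builds $K$ in five coning stages $K_0\leadsto L_1\leadsto L_2\leadsto K_0'\leadsto K'\leadsto K$, each adjoining a cone $\mathrm{cone}(L)$ on a full subcomplex $L$ through a fresh vertex. At the level of polyhedral products, adjoining such a cone exhibits the new moment-angle space as the pushout of $\mathcal{Z}_K\times S^1 \hookleftarrow \mathcal{Z}_L\times S^1 \hookrightarrow \mathcal{Z}_L\times D^2$, so one is attaching $\mathcal{Z}_L\times D^2$ to $\mathcal{Z}_K\times S^1$ along a tube around $\mathcal{Z}_L\times\{*\}$. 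When $\mathcal{Z}_L$ is simple---inductively, a sphere or a connected sum already understood from earlier stages---this pushout reduces to a sequence of framed surgeries on $\mathcal{Z}_K\times S^1$; and when the embeddings $\mathcal{Z}_L\hookrightarrow\mathcal{Z}_K$ sit sufficiently symmetrically, such surgeries peel off sphere-product summands as boundary connected sums. The stratification in Construction \ref{cons:1} appears designed with precisely this reduction in mind, so one expects the $(8)S^5\times S^7$ and $(8)S^6\times S^6$ packets to be produced by the final two conings and the exceptional $S^3\times S^3\times S^6$ summand to be carved out by the earlier stages.

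If the constructive route stalls, the backup is Kreck's modified surgery. Moment-angle manifolds corresponding to polytopal spheres are stably parallelisable, so both $\mathcal{Z}_K$ and the target connected sum have the same normal type through the stable range. Modified surgery then reduces the diffeomorphism classification to matching a framed bordism class (detected on a $12$-manifold by signature and Pontryagin numbers, all of which agree once the rational cohomology rings do) together with vanishing of a surgery obstruction in a quotient of the simply connected $L$-group $L_{12}(\mathbb{Z})$.

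The principal obstacle in the constructive route is that the pushout interpretation above produces a boundary connected sum with a sphere product only when $\mathcal{Z}_L\hookrightarrow\mathcal{Z}_K$ is smoothly unknotted and carries a trivial framed normal bundle; verifying these conditions at each of the five stages forces one to engage seriously with the combinatorics of $K$ in a way that cohomology alone cannot detect. The principal obstacle in the backup route is the novel three-sphere-product summand $S^3\times S^3\times S^6$, whose presence introduces a triple cup product absent in all previously known examples, opening the door to a possible secondary invariant---a Massey product or other higher refinement of the cohomology ring---that might distinguish $\mathcal{Z}_K$ from the target. Ruling this out would require a computation genuinely beyond the cup-product calculation of Proposition \ref{prop:1}, and is where I expect the bulk of the work to lie.
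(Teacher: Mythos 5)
The statement you are addressing is stated in the paper as a \emph{conjecture}: the authors offer no proof of it, and indeed the whole point of their Proposition \ref{prop:1} is only to establish the cohomology-ring isomorphism, with the diffeomorphism question left open. So there is no argument in the paper to compare yours against, and the relevant question is whether your proposal actually closes the gap. It does not: what you have written is a research plan with two candidate strategies, each of which you yourself flag as having an unresolved core difficulty. In the constructive route, the pushout description $\mathcal{Z}_{K\cup\mathrm{cone}(L)}=(\mathcal{Z}_K\times S^1)\cup_{\mathcal{Z}_L\times S^1}(\mathcal{Z}_L\times D^2)$ is correct as a decomposition of spaces, but the step from there to ``peeling off sphere-product summands'' requires, at each of the five stages, that the submanifold $\mathcal{Z}_L\subset\mathcal{Z}_K$ be smoothly unknotted with trivialized normal data and that the resulting handle attachments reorganize into boundary connected sums. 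None of this is verified, and it is exactly the part that cohomology cannot see; for general $L$ the space $\mathcal{Z}_L$ is not even a manifold, so the ``framed surgery'' interpretation needs justification before it can be invoked. In the modified-surgery route, you need (i) a proof that both manifolds have the same normal $1$-type together with a choice of bordism between them in the corresponding bordism group, and (ii) control of the surgery obstruction for a $2$-connected (not highly connected) $12$-manifold, which does not reduce to signature and Pontryagin numbers alone. Your remark that these invariants ``agree once the rational cohomology rings do'' is not a valid inference in general --- Pontryagin classes are not homotopy invariants --- and would have to be replaced by an actual argument (e.g.\ stable parallelizability of $\mathcal{Z}_K$ for polytopal $K$, which itself needs a citation or proof).

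The most concrete missing ingredient, which you correctly identify but do not supply, is the computation ruling out higher-order attaching information: Massey products or other secondary operations that could distinguish $\mathcal{Z}_K$ from the model connected sum even though their cup-product structures agree. Until that computation (or one of the two geometric programs) is actually carried out, the statement remains exactly what the paper says it is --- a conjecture. Your proposal is a reasonable map of how one might attack it, but it proves nothing as written.
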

Note that the connected sum of sphere products in Proposition \ref{prop:1} only has one product of three spheres, we then ask:
Is there a moment-angle manifold (corresponding to a simplicial $3$-sphere) whose cohomology ring is isomorphic to the one of
a connected sum of sphere products with more than one product of three spheres? The following Theorem gives a negative answer to this question.
\begin{thm}\label{thm:a}
Let $K$ be a $n$-dimensional simplicial sphere ($n\geq 2$) satisfies $H^*(\mathcal {Z}_{K})\cong H^*(M)$,
where $M\cong M_1\#\cdots\#M_k$, and each $M_i$ is a product of spheres. Let $q_i$ be the number of sphere factors of $M_i$. Then
\begin{enumerate}[(a)]
\item If $q_i=n+1$ for some $i$, then $k=1$, and $\mathcal {Z}_{K}\cong M\cong S^3\times S^3\cdots\times S^3$.
\item Let $I=\{i:q_i\geq [\frac{n}{2}]+2\}$ (where $[\cdot]$ denotes the integer part). Then $|I|\leq 1$.
\end{enumerate}
\end{thm}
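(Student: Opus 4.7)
My plan is to extract, from each summand $M_i$ of $M$, a combinatorial partition of $[m]$ via the identity $[M_i]=[M]=\psi(\xi)$. Writing $M_i=S^{d_1^{(i)}}\times\cdots\times S^{d_{q_i}^{(i)}}$ and decomposing each sphere generator $e_t^{(i)}\in H^{d_t^{(i)}}(M_i)$ along the Hochster grading $H^*(\mathcal{Z}_K)=\bigoplus_J\psi(\tilde H^*(K_J))$, the identity $\prod_t e_t^{(i)}=\psi(\xi)$ (which lies in the $(N,[m])$-graded piece, $N=n+m+1$), together with the product formula in Remark~\ref{rem:1} and the fact that $\tilde H^n(K_J)\neq 0$ forces $J=[m]$ (combinatorial Alexander duality on the $n$-sphere $K$), yields a Key Lemma: there exist classes $c_t^{(i)}\in\tilde H^{p_t^{(i)}}(K_{J_t^{(i)}})$ with the $J_t^{(i)}$ pairwise disjoint, $\bigsqcup_t J_t^{(i)}=[m]$, each $|J_t^{(i)}|\ge 2$ (else $K_{J_t^{(i)}}$ is contractible), $d_t^{(i)}=p_t^{(i)}+|J_t^{(i)}|+1$, $\sum_t p_t^{(i)}=n+1-q_i$, and $\psi(c_1^{(i)}\ast\cdots\ast c_{q_i}^{(i)})=\pm\psi(\xi)$. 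In particular $q_i\le n+1$, so (a) sits at the extreme case.

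\textbf{Part (a).} With $q_i=n+1$ the Key Lemma forces $p_t^{(i)}=0$ for every $t$, so each $K_{J_t^{(i)}}$ is disconnected with $|J_t^{(i)}|\ge 2$, hence $m\ge 2(n+1)$. The central step is to upgrade $|J_t^{(i)}|\ge 2$ to $|J_t^{(i)}|=2$ for all $t$. I plan to do this by a Betti-number comparison in degree $3$: $b_3(\mathcal{Z}_K)$ equals the number of missing edges of $K$, while applying the Key Lemma to each summand gives $b_3(M_j)=\#\{t:p_t^{(j)}=0,\,|J_t^{(j)}|=2\}$. Any part $J_{t_0}^{(i)}$ with $|J_{t_0}^{(i)}|\ge 3$ forces extra missing edges strictly inside $J_{t_0}^{(i)}$ (coming from pairs of vertices in distinct components of $K_{J_{t_0}^{(i)}}$), producing generators of $H^3(\mathcal{Z}_K)$ that cannot all be absorbed by the missing-edge parts of any $M_j$'s Key-Lemma partition, so $b_3(\mathcal{Z}_K)>b_3(M)$, contradiction. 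Hence each $|J_t^{(i)}|=2$, $m=2(n+1)$, and the $J_t^{(i)}$ are $n+1$ pairwise disjoint missing edges partitioning $[m]$. Consequently $K\subseteq L=\partial\Delta^1\ast\cdots\ast\partial\Delta^1$ ($n+1$ joins, the boundary of the $(n{+}1)$-cross-polytope); both being simplicial $n$-spheres on the same vertex set, $K=L$ and so $\mathcal{Z}_K=(S^3)^{n+1}$, which in particular forces $k=1$.

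\textbf{Part (b).} Suppose for contradiction $i_1\neq i_2\in I$, so $q_{i_\alpha}\ge[n/2]+2$ for $\alpha=1,2$. The Key Lemma produces two partitions $[m]=\bigsqcup_t J_t^{(i_\alpha)}$. Since cross-summand cup products in a connected sum of manifolds vanish in all degrees, $\psi(c_t^{(i_1)})\cup\psi(c_{t'}^{(i_2)})=0$ for every pair $(t,t')$; via the Hochster product formula this says either $J_t^{(i_1)}\cap J_{t'}^{(i_2)}\neq\emptyset$ or $c_t^{(i_1)}\ast c_{t'}^{(i_2)}\in\tilde H^{p_t^{(i_1)}+p_{t'}^{(i_2)}+1}(K_{J_t^{(i_1)}\cup J_{t'}^{(i_2)}})$ vanishes. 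A direct analysis of the juxtaposition map $\mu$ shows that when both $p$'s equal $0$ and the supports are disjoint, the Hochster product coincides with the generator of $\tilde H^1$ of the relevant join-like subcomplex and is non-zero whenever $K$ contains the cross-edges. Tracing this through the Key-Lemma identity $\sum_t p_t^{(i_\alpha)}=n+1-q_{i_\alpha}$, which gives at least $2q_{i_\alpha}-n-1$ parts with $p_t=0$ in each partition, forces the two partitions to intersect pairwise in a strong sense; together with the resulting inequality $m\ge q_{i_1}q_{i_2}\ge([n/2]+2)^2$ and $f$-vector constraints on a simplicial $n$-sphere with $m$ vertices, one derives the contradiction.

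The main obstacle in both parts is the same in spirit: converting the cohomological rigidity of a connected sum of sphere products (uniqueness of the top-class factorization from each summand, together with vanishing of all cross-summand products) into sharp combinatorial restrictions on the Key-Lemma partitions. Part (b) is the more delicate step, since two large partitions must cohabit and one must pin down precisely when a cross-summand Hochster product can vanish on disjoint supports inside $K$.
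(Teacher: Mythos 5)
Your ``Key Lemma'' is exactly the paper's opening move in both parts: expanding the identity $e_1^{(i)}\smile\cdots\smile e_{q_i}^{(i)}=\pm\psi(\xi)$ in the Hochster grading to obtain classes $c_t\in\w H^{k_t}(K_{J_t})$ with pairwise disjoint supports, $\bigcup_t J_t=[m]$, $\sum_t k_t=n+1-q_i$, and nonzero product in $\w H^n(K)$. After that, however, both of your parts have genuine gaps. In part (a) the Betti-number comparison does not close: the correct identity is $\#MF_2(K)=b_3(\mathcal Z_K)=b_3(M)=\sum_j\#\{t:|J_t^{(j)}|=2\}$, summed over \emph{all} summands $M_j$, so the $\geq 2$ extra missing edges created inside a part $J_{t_0}^{(i)}$ of size $\geq 3$ can perfectly well be ``absorbed'' as size-two parts of the partitions attached to \emph{other} summands $M_j$, $j\neq i$; since $k=1$ is precisely what you are trying to prove, you cannot exclude this by counting alone. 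In part (b) there are two problems. First, cross-summand vanishing holds for the sphere generators $e_t^{(i_1)}, e_{t'}^{(i_2)}$ but not for their individual Hochster components $c_t^{(i_1)}, c_{t'}^{(i_2)}$ --- one graded component of a product of sums can be nonzero even when the full product vanishes --- so the asserted vanishing $\psi(c_t^{(i_1)})\smile\psi(c_{t'}^{(i_2)})=0$ is unjustified. Second, the proposed endgame cannot work even in principle: a lower bound $m\geq([\frac{n}{2}]+2)^2$ contradicts nothing, because a simplicial $n$-sphere can have arbitrarily many vertices; no ``$f$-vector constraint'' bounds $m$ from above.

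The missing ingredient in both parts is the ring-theoretic input that the paper isolates in Lemmas \ref{lem:1}--\ref{lem:3}. Lemma \ref{lem:1} shows that a nonzero product of two classes of $\mathcal H^{0,*}(K)$ (which your Key Lemma supplies: in (a) all $k_t=0$, and in (b) at least $2q_u-n-1\geq 2$ of them vanish) forces a full subcomplex isomorphic to the boundary of a polygon; Lemma \ref{lem:2} rules out polygons with $\geq 5$ vertices by exhibiting a class $c$ below the top degree with two factorizations $a_1\smile b_1=a_2\smile b_2=c$ and $a_1\smile a_2=0$, a configuration impossible in the cohomology of a connected sum of sphere products; and Lemma \ref{lem:3} then shows that every full subcomplex $K_I$ with $\w H^0(K_I)\neq 0$ has $|I|=2$ and that any two such classes have nonzero product. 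This is what replaces your degree-$3$ count in (a) (giving $|J_t|=2$ directly, after which your identification $K=\partial\Delta^1*\cdots*\partial\Delta^1$ and $\mathcal Z_K\cong (S^3)^{n+1}$ agrees with the paper), and in (b) it yields the argument you are missing: $M_u$ contains at least two $S^3$ factors, and since all classes of $\mathcal H^{0,*}(K)$ pairwise multiply nontrivially, all $S^3$ factors of $M$ must lie in a single connected summand, so at most one $q_v$ can be $\geq[\frac{n}{2}]+2$.
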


\begin{lem}\label{lem:1}
Let $K$ be a simplicial complex on $[m]$. Given two classes $[a],\,[b]\in \mathcal {H}^{0,*}(K)$, if $[a]*[b]\neq 0$, then there must
be a full subcomplex $K_I$ ($|I|\geq 4$) which is isomorphic to the boundary of a polygon, and satisfying $[a]*[b]\in \w H^1(K_I)$.
\end{lem}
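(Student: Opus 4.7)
The plan is to detect $[a]*[b]$ by a chord-free embedded $1$-cycle in $K_{J_1\cup J_2}$ whose vertex set will then span the desired polygon. By the direct-sum decomposition of $\mathcal H^{0,*}(K)$ I may assume $[a]\in\w H^0(K_{J_1})$ and $[b]\in\w H^0(K_{J_2})$ for some $J_1,J_2\subseteq[m]$, and since the map $\eta$ of Remark \ref{rem:1} vanishes whenever the two supports intersect, $J_1\cap J_2=\emptyset$ and $[a]*[b]\in\w H^1(K_{J_1\cup J_2})$ is represented by the cocycle $\gamma=\varphi^*\mu(c_1\otimes c_2)$, with $c_i$ a $0$-cocycle representing the corresponding class. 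Being a $0$-cocycle, each $c_i$ is constant on the connected components of $K_{J_i}$, so $\gamma$ vanishes on every \emph{pure} edge (both endpoints in the same $J_i$) and takes the value $c_1(v)c_2(w)$ on a \emph{mixed} edge $(v,w)$.

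Because $[\gamma]\ne 0$ there is an embedded simple $1$-cycle $z\subset K_{J_1\cup J_2}$ with $\gamma(z)\ne 0$ (a general $1$-cycle decomposes into embedded ones and $\gamma$ is linear). I then iterate the following \emph{chord reduction}: if the full subcomplex on $V(z)$ contains an edge $e$ joining two non-adjacent vertices of $z$, then $e$ splits $z$ into two strictly shorter embedded simple cycles $z_1,z_2$, each closed off by $e$, with $z_1+z_2=z$ as $1$-chains since the two copies of $\pm e$ cancel. Linearity gives $\gamma(z_1)+\gamma(z_2)=\gamma(z)\ne 0$, so $\gamma(z_i)\ne 0$ for at least one $i$; I replace $z$ by this $z_i$ and continue. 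Since the length strictly decreases, the process terminates at a chord-free embedded cycle $z^*$ with $\gamma(z^*)\ne 0$. Setting $I:=V(z^*)$, chord-freeness implies that $K_I$ contains only the $|I|$ edges of $z^*$ and no $d$-simplex with $d\ge 2$ (any such would force a chord), so $K_I$ is the boundary of an $|I|$-gon.

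It remains to show $|I|\ge 4$. If $z^*$ were a triangle on $v_1,v_2,v_3$, either all three vertices lie in one of $J_1,J_2$, whence every edge of $z^*$ is pure and $\gamma(z^*)=0$; or exactly two of them, say $v_1,v_2$, lie in $J_1$. The edge $(v_1,v_2)$ then lies in $K_{J_1}$, placing $v_1$ and $v_2$ in the same component of $K_{J_1}$ and forcing $c_1(v_1)=c_1(v_2)$, so a direct calculation yields $\gamma(z^*)=c_2(v_3)\bigl(c_1(v_2)-c_1(v_1)\bigr)=0$. Both cases contradict $\gamma(z^*)\ne 0$, so $|I|\ge 4$ and the restriction of $[a]*[b]$ to $\w H^1(K_I)$, represented by $\gamma|_{K_I}$, is non-zero, as required. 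The only delicate step is the chord reduction, where a careful orientation check is needed to establish $z_1+z_2=z$; once this bookkeeping is in place the remainder of the argument is essentially graph-theoretic, and the component-constancy of $c_1,c_2$ enters only in the final triangle exclusion that rules out the smallest polygon.
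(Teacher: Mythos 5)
Your proof is correct, and while it shares the paper's central combinatorial idea---that a chord-free (equivalently, minimal-length) embedded cycle spans a full subcomplex isomorphic to a polygon boundary---it packages the argument quite differently. The paper first passes to an auxiliary complex $K'$ with $MF(K')=\{I\in MF(K):|I|\neq 3\}$, uses the functoriality of Remark \ref{rem:2} and the fact that $i^*$ is an isomorphism on $\mathcal{H}^{0,*}$ to transport the problem there, and only then invokes a general assertion (proved by taking a nonzero class in $H_1$ with a representative of minimal vertex number) to produce the polygon; the detour through $K'$ is precisely what rules out triangles, since the boundary of a triangle cannot occur as a full subcomplex of $K'$. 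You instead stay in $K$, work with the explicit cocycle $\gamma=\varphi^*\mu(c_1\otimes c_2)$, run the chord reduction so as to preserve the numerical invariant $\gamma(z)\neq 0$ rather than mere nonvanishing of a homology class, and exclude triangles directly from the component-constancy of the $0$-cocycles. Your route buys two things: it avoids the auxiliary complex entirely, and it shows directly that the restriction of the specific class $[a]*[b]$ to $\w H^1(K_I)$ is nonzero (the paper's assertion only gives surjectivity of $j^*$ onto $\w H^1(K'_I)$, and tying that back to $[a]*[b]$ requires an extra, somewhat implicit step). Two minor points of bookkeeping: your parenthetical claim that any $2$-simplex in $K_{V(z^*)}$ ``would force a chord'' is only valid once $|V(z^*)|\geq 4$ is known, so that sentence should logically come after the triangle exclusion; and your opening reduction to $[a]\in\w H^0(K_{J_1})$, $[b]\in\w H^0(K_{J_2})$ is the same multihomogeneity assumption the paper makes (``without loss of generality\dots $[a']*[b']\in\w H^1(K'_J)$''), which is needed for the statement ``$[a]*[b]\in\w H^1(K_I)$'' to parse at all, so it is harmless here.
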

\begin{proof}
Let $\mathcal {M}=\{I\in MF(K):|I|\neq 3\}$, and let $K'$ be a simplicial complex on $[m]$ so that $MF(K')=\mathcal {M}$.
Clearly, $K$ is a subcomplex of $K'$. Note that $K'$ and $K$ have the same $1$-skeleton, so if we can prove that for some $I\in [m]$,
$K'_I$ is isomorphic to the boundary of a polygon ($K'_I$ can not be the boundary of a triangle by the definition of $\mathcal {M}$),
then the result holds. From Remark \ref{rem:2}, there is a ring homomorphism $i^*:\mathcal {H}^{*,*}(K')\to \mathcal {H}^{*,*}(K)$
induced by the simplicial inclusion $i:K\hookrightarrow K'$.
It is easy to see that $i^*$ is a isomorphism when restricted to $\mathcal {H}^{0,*}(K')$.
Suppose $i^*([a'])=[a]$ and $i^*([b'])=[b]$. By assumption, $i^*([a']*[b'])\neq 0$, so $[a']*[b']\in \mathcal {H}^{1,*}(K')\neq 0$. Without loss of generality,
we can assume $[a']*[b']\in \w H^1(K'_J)$ for some $J\subset [m]$.
The lemma follows once we prove the following assertion:
\begin{ass}
For any simplicial complex $\Gamma$ satisfies $\w H^1(\Gamma)\neq 0$,
there must be a full subcomplex $\Gamma_I$ which is isomorphic to the boundary of a polygon, satisfying that
\[j^*:\w H^1(\Gamma)\to \w H^1(\Gamma_I)\]
is an epimorphism, where $j:\Gamma_I\to \Gamma$ is the inclusion map.
\end{ass}
Now we prove this. Since $H^1(\Gamma)\neq 0$, $H_1(\Gamma)\neq 0$, then there is a nonzero homology class $[c]\in H_1(\Gamma)$
represented by the $1$-cycle
\[c=(v_1,v_2)+(v_2,v_3)\cdots+(v_{k-1},v_k)+(v_{k},v_1),\]
where $v_i$ is a vertex of $\Gamma$. Without loss of generality, we assume $v_1,v_2,\dots,v_k$ are all different
and the vertex number $k$ is minimal among all $[c]$'s and their representations. Let $I=(v_1,\dots,v_k)$, we claim that $\Gamma_I$ is isomorphic to the
boundary of a polygon. If this is not true, then there must be a $1$-simplex, say $(v_1,v_j)\in \Gamma_I$ such that $j\neq 2,k$. Let
\[c_1=(v_1,v_2)+(v_2,v_3)\cdots+(v_j,v_1);\quad c_2=(v_1,v_j)+(v_j,v_{j+1})\cdots+(v_k,v_1).\]
Then $c=c_1+c_2$, and therefore $[c_1]\neq 0$ or $[c_2]\neq 0$. In either case, the vertex number of $c_i$ ($i=1,2$) is less than $k$, a contradiction.
Apparently, $j_*([c])$ is the fundamental class of $\Gamma_I$.
\end{proof}

\begin{lem}\label{lem:2}
Let $K$ be a simplicial sphere satisfies $H^*(\mathcal {Z}_{K})$ is isomorphic to the cohomology ring
of a connected sum of sphere products. If a proper
full subcomplex is isomorphic to the boundary of a $m$-gon, then $m\leq 4$.
\end{lem}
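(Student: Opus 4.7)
The plan is to assume $m\ge 5$ and derive a contradiction by exploiting the many missing edges of $\partial P_m$. For each of the $m(m-3)/2\ge 5$ missing edges $e$ of $K_I$ (which are also missing faces of $K$ since $K_I$ is a full subcomplex), I take the generator $a_e$ of $\w H^0(K_e)\cong \mathbb{Z}$ and the generator $b_e$ of $\w H^0(K_{I\setminus e})\cong \mathbb{Z}$; the latter is nonzero because deleting the two non-adjacent vertices of $e$ from the cycle $\partial P_m$ leaves two disjoint arcs. Let $\xi_I$ generate $\w H^1(K_I)\cong \mathbb{Z}$.

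Using Remark \ref{rem:1}, I would verify three Hochster-ring identities. First, $a_e*b_e=\pm\xi_I$: both $K_I$ and the join $K_e*K_{I\setminus e}$ are homotopy equivalent to $S^1$, and the canonical inclusion $K_I\hookrightarrow K_e*K_{I\setminus e}$ wraps the cycle once around the $S^1$, giving a degree $\pm1$ map on $\w H^1$. Second, $a_e*b_{e'}=0$ for $e\ne e'$, since $e\cap(I\setminus e')=e\setminus e'\ne\emptyset$. Third, $a_e*a_{e'}=0$ for all $e,e'$: automatic if the two index sets meet; and if $e\cap e'=\emptyset$, then $K_{e\cup e'}$ is a four-vertex subgraph of the $m$-gon which, because $m\ge 5$, is a disjoint union of paths and hence acyclic.

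I would then transfer these relations through the given isomorphism $H^*(\mathcal{Z}_K)\cong H^*(M)$ with $M=M_1\#\cdots\# M_r$, working over $\mathbb{Q}$. Writing $\alpha_e=\psi(a_e)\in H^3(M)$, $\beta_e=\psi(b_e)\in H^{m-1}(M)$ and $\xi=\psi(\xi_I)\in H^{m+2}(M)$, decompose each class into summands $(\alpha_e)_k\in H^*(M_k)$, and similarly for the others. Since $\mathcal{Z}_K$ is $2$-connected, every sphere factor of every $M_k$ has dimension $\ge 3$, so $H^3(M_k)$ is spanned by the degree-$3$ sphere generators of $M_k$. In the exterior algebra $H^*(M_k)$, the vanishing of $(\alpha_e)_k(\alpha_{e'})_k$ for all $e,e'$ is exactly the vanishing of every $2\times 2$ minor of the coefficient matrix; hence all $(\alpha_e)_k$ lie on a single line, say $(\alpha_e)_k=\lambda_{ek}c^{(k)}$ for some fixed $c^{(k)}\in H^3(M_k)$. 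I expect this rank-$1$ reduction to be the main technical step.

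The endgame is then short. Because $I$ is proper and $\dim K\ge 2$ (the only nontrivial case), $\xi$ lies strictly below the top degree of $H^*(M)$, so $\xi=\sum_k\xi_k$ with $\xi_k\in H^{m+2}(M_k)$. The relations $\alpha_e\beta_{e'}=\pm\xi\,\delta_{ee'}$ become $\lambda_{ek}\,c^{(k)}(\beta_{e'})_k=\xi_k\delta_{ee'}$ in each summand. If $\xi_k\ne 0$ for some $k$, then for some $e$ both $\lambda_{ek}$ and $c^{(k)}(\beta_e)_k$ are nonzero, forcing $c^{(k)}(\beta_{e'})_k=0$ whenever $e'\ne e$; but then $(\alpha_{e'})_k(\beta_{e'})_k=\lambda_{e'k}c^{(k)}(\beta_{e'})_k=0\ne\xi_k$, a contradiction. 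Hence $\xi_k=0$ for every $k$, and $\xi=0$, contradicting $\psi(\xi_I)\ne 0$.
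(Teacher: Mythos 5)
Your proof is correct, but it takes a genuinely different route from the paper's. The paper invokes McGavran's theorem (Theorem \ref{thm:2}) for the $m$-gon to produce one degree-$3$ class $a_1$ and one degree-$4$ class $a_2$ in $H^*(\mathcal{Z}_{K_I})\subset H^*(\mathcal{Z}_K)$ with $a_1\smile b_1=a_2\smile b_2=c\neq 0$ in degree $m+2$ and $a_1\smile a_2=0$; since a vanishing product forces $a_1$ and $a_2$ to be supported on disjoint sets of connected summands of $M$, and since $\deg c$ is below the top degree so that each product $a_i\smile b_i$ stays in the summands supporting $a_i$, the two products cannot coincide. You instead bypass McGavran entirely and work in the Hochster ring, extracting from the $\geq 5$ missing edges of the polygon a family of degree-$3$ classes $\alpha_e$ with $\alpha_e\alpha_{e'}=0$ and $\alpha_e\beta_{e'}=\pm\xi\,\delta_{ee'}$, and then run a rank-one linear-algebra argument in each summand $M_k$. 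Both arguments ultimately hinge on the same two facts --- that $\xi$ (resp.\ $c$) lives strictly below the top degree because $K_I$ is proper, and that classes of a connected sum with vanishing products below the top degree are constrained to single summands --- but your version is self-contained (the three Hochster identities you list are all easily verified: the inclusion of the $m$-cycle into $K_e*K_{I\setminus e}\simeq S^1$ is degree $\pm 1$, and any induced subgraph of a cycle on fewer vertices is a forest, so $a_e*a_{e'}=0$), whereas the paper's is shorter modulo the citation. The only points worth spelling out in a final write-up are (i) why cross-summand products vanish in degree $m+2<\dim M$ (pull back along $\# M_k\to\bigvee M_k$), which both you and the paper use implicitly, and (ii) the observation that for $n=1$ the hypothesis is vacuous since a proper full subcomplex of a cycle is never a cycle; you gesture at this but it deserves one sentence.
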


\begin{proof}
Suppose on the contrary that there is a proper full subcomplex $K_I$ isomorphic to the boundary of a $m$-gon with $m\geq 5$.
Then $H^*(\mathcal {Z}_{K_I})$ is a proper subring and a direct summand of $H^*(\mathcal {Z}_{K})$.
By Theorem \ref{thm:2} we can find five elements $a_1,a_2,b_1,b_2,c$ of $H^*(\mathcal {Z}_{K})$,
where $\mathrm{dim}(a_1)=3$, $\mathrm{dim}(a_2)=4$,
$\mathrm{dim}(b_1)=m-1$, $\mathrm{dim}(b_2)=m-2$ and $\mathrm{dim}(c)=m+2$, such that each of them is a generator of a $\mathbb{Z}$
summand of $H^*(\mathcal {Z}_{K})$, and the cup product relations between them are given by:
\[a_1\smile b_1=a_2\smile b_2=c,\]
all other products are zero. Clearly, $\mathrm{dim}(c)$ is not equal to the top dimension of $H^*(\mathcal {Z}_{K})$.
Suppose $H^*(\mathcal {Z}_{K})$ is isomorphic to the cohomology ring of
\[S^{f(1,1)}_{1,1}\times S^{f(1,2)}_{1,2}\times\cdots\times S^{f(1,k(1))}_{1,k(1)}\#\cdots\#
S^{f(n,1)}_{n,1}\times S^{f(n,2)}_{n,2}\times\cdots\times S^{f(n,k(n))}_{n,k(n)},\]
where $f$ is a function of $\mathbb{(Z^+)}^2\to\mathbb{Z^+}$ ($f(i,j)\geq3$ for all $i,j$), $S^{f(i,j)}_{i,j}=S^{f(i,j)}$,
$k(i)\in \mathbb{Z}^+$ denote the number of spheres in the $i$th summand of sphere product.
Denote by $e_{ij}^{(k)}$ a generator of $H^k(\mathcal {Z}_{K})$ corresponding to $S^k_{i,j}$ ($f(i,j)=k$). Then we can write
\[a_1=\sum_{f(i,j)=3}\lambda_{ij}e^{(3)}_{ij},\quad a_2=\sum_{f(i,j)=4}\lambda_{ij}'e^{(4)}_{ij}\]
where $\lambda_{ij},\,\lambda_{ij}'\in \mathbb{Z}$. It is easy to see that $e_{ij}^{(k)}\smile e^{(t)}_{rs}\neq0$
if and only if $i=r$ and $j\neq s$. Since $a_1\smile a_2=0$, we have that if $\lambda_{ij},\lambda_{i'j'}'\neq0$, then $i\neq i'$.
However this implies that $a_1\smile b_1\neq a_2\smile b_2$ since
$\mathrm{dim}(c)$ is not equal to the top dimension of $H^*(\mathcal {Z}_{K})$, a contradiction.
\end{proof}

\begin{lem}\label{lem:3}
Let $K$ be a $n$-dimensional simplicial sphere satisfies $H^*(\mathcal {Z}_{K})$ is isomorphic to the cohomology ring
of a connected sum of sphere products. If there is a
full subcomplex isomorphic to the boundary of a quadrangle, then for any full subcomplex $K_I$ saisfies $\w H^0(K_I)\neq0$,
we have $|I|=2$. Moreover, if $I_1,I_2$ are two different such sequences, then $K_{I_1\cup I_2}$ is isomorphic to the boundary of a quadrangle.
\end{lem}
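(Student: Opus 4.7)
I plan to prove (i) by contradiction, after which (ii) will follow as a largely combinatorial consequence of (i).

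For part (i), assume that $K_I$ is disconnected with $|I|\geq 3$. My first step is a reduction to $|I|=3$: if $|I|\geq 4$, pick a vertex $v$ in one component of $K_I$ and two vertices $u_1,u_2\in I\setminus\{v\}$ not lying in the component of $v$. Then $\{v,u_1\}$ and $\{v,u_2\}$ are missing edges of $K$, so $K_{\{v,u_1,u_2\}}$ is a disconnected full subcomplex with vertex set of size three, and I may replace $I$ by $\{v,u_1,u_2\}$. The case $|I|=3$ splits into two subcases---(a) three isolated vertices, or (b) one edge plus an isolated vertex---and in each there is some $v\in I$ incident to two missing edges $M_1=\{v,w_1\}$ and $M_2=\{v,w_2\}$ of $K$.

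The core of the argument then combines the generator $[c]\in\widetilde{H}^0(K_I)$, whose image $\psi([c])$ lies in $H^4(\mathcal{Z}_K)$, with the hypothesized quadrangle $K_{\{i,j,k,l\}}$ having diagonals $D_1=\{i,k\}$ and $D_2=\{j,l\}$. If for some missing edge $M$ of $K$ disjoint from $I$ the Hochster product $[c]*[a_M]$ is nonzero, Lemma \ref{lem:1} together with Lemma \ref{lem:2} produces a full subcomplex on four vertices of $I\cup M$ isomorphic to $\partial(\text{quadrangle})$. When the vertex set of this polygon contains $I$, the three-vertex subcomplex $K_I$ is forced to be a subcomplex of a $4$-cycle, hence connected, contradicting the disconnectedness of $K_I$. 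My plan for closing the argument when the polygon instead excludes a vertex of $I$ is to run the same analysis for each of the diagonals $D_1,D_2$ (and suitable combinations of $[c]$ with $[a_{D_1}],[a_{D_2}]$), and to use that $\psi([c])\in H^4(\mathcal{Z}_K)$ must realize an $S^4$-factor of some summand of the given connected sum, whose allowed cup-products with the $H^3$-classes $\psi([a_{D_1}]),\psi([a_{D_2}])$ are tightly constrained---this should eventually produce a proper polygon-boundary full subcomplex of $K$ with at least five vertices, contradicting Lemma \ref{lem:2}. I expect this to be the main obstacle, as it requires a careful joint use of Lemma \ref{lem:1}, Lemma \ref{lem:2}, and the connected-sum structure.

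Once (i) is available, part (ii) is combinatorial. Given $I_1\neq I_2$ with $\widetilde{H}^0(K_{I_j})\neq 0$, part (i) gives $|I_1|=|I_2|=2$, so both are missing edges of $K$. If $|I_1\cap I_2|=1$, then $K_{I_1\cup I_2}$ would have three vertices and at most one present edge, hence be disconnected with $|I_1\cup I_2|=3\neq 2$, contradicting (i). Hence $I_1\cap I_2=\emptyset$ and $|I_1\cup I_2|=4$, and applying (i) once more to $K_{I_1\cup I_2}$ shows it is connected. Finally, any further missing edge $I_3\subseteq I_1\cup I_2$ of $K$ would, by the disjointness argument just used, have to be disjoint from both $I_1$ and $I_2$, forcing $I_3=\emptyset$; thus the only missing edges in $K_{I_1\cup I_2}$ are the disjoint pair $I_1,I_2$, and the remaining four present edges on four vertices form the $4$-cycle $\partial(\text{quadrangle})$.
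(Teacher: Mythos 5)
The heart of this lemma is part (i), and your proposal does not actually prove it. You reduce correctly to the configuration of two missing edges $M_1=\{v,w_1\}$, $M_2=\{v,w_2\}$ sharing a vertex (equivalently, a disconnected full subcomplex on three vertices), but the step that is supposed to derive a contradiction from this configuration is left as a plan (you yourself flag it: ``I expect this to be the main obstacle''), and the plan has two unfilled holes. First, your analysis is conditioned on ``$[c]*[a_M]\neq 0$ for some missing edge $M$ of $K$ disjoint from $I$,'' and nothing in the proposal shows that such an $M$ exists or that any of these Hochster products is nonzero; if they all vanish, the argument says nothing. Second, even granting a nonzero product, Lemma \ref{lem:1} together with Lemma \ref{lem:2} only yields a quadrangle full subcomplex on four of the five vertices of $I\cup M$, and you obtain a contradiction only when the omitted vertex lies in $M$; the case where it lies in $I$ is exactly the one you defer, and it is not resolved by the vague appeal to ``tightly constrained'' cup products with the diagonal classes. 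So the essential content of the lemma is missing.

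For comparison, the paper closes this case by a specific construction rather than by Lemma \ref{lem:1}. Writing the hypothesized quadrangle as $K_J$ with $J=(1,2,3,4)$ and $MF(K_J)=\{(1,3),(2,4)\}$, it first shows that no vertex $v\notin J$ forms a missing edge with a vertex of $J$: assuming $(1,5)\in MF(K)$, it takes the complex $\Gamma$ on $\{1,3,5\}$ with $MF(\Gamma)=\{(1,3),(1,5)\}$ and the join $L=\Gamma*K_{(2,4)}$, checks that $\phi(c_1)*c_2$ generates a $\mathbb{Z}$-summand of $\widetilde H^1(K_{(1,2,3,4,5)})$, and then applies Poincar\'e duality on $\mathcal{Z}_K$ twice to produce classes $\psi(e_1)$ of degree $3$ and $\psi(\phi(c_1))$ of degree $4$ satisfying $\psi(e_0)\smile\psi(e_1)=\psi(c_0)\smile\psi(\phi(c_1))\neq 0$ in non-top degree while $\psi(e_1)\smile\psi(\phi(c_1))=0$ --- precisely the configuration shown to be impossible in a connected sum of sphere products in the proof of Lemma \ref{lem:2}. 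A second, bootstrapping step handles two intersecting missing edges disjoint from $J$ by producing a new quadrangle $J_0=(v_1,v_2,1,3)$ containing one of them and reapplying the first step. Your reduction to $|I|=3$ and your part (ii) are essentially correct (and part (ii) matches what the paper's disjointness statement gives), but as it stands the proposal establishes only the easy combinatorial reductions, not the cohomological core.
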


\begin{proof}
The case $n=1$ are trivial, so we assume $n>1$. If we can prove the statement that for any two different missing faces $\sigma_1,\,\sigma_2\in MF(K)$, which contain two vertices, we have $\sigma_1\cap\sigma_2=\emptyset$, then the lemma holds.

Suppose $J=(1,2,3,4)$, and $MF(K_J)=\{(1,3),(2,4)\}$ (i.e., $K_J$ is isomorphic to the boundary of a quadrangle) by assumption.
First we will prove that for any vertex $v\not\in J$ and any $j\in J$, $(j,v)$ is a simplex of $K$.
Without loss of generality, suppose on the contrary that
$(1,5)\in MF(K)$. Let $\Gamma$ be a simplicial complex with vertex set $\{1,3,5\}$ such that $MF(\Gamma)=\{(1,3),(1,5)\}$.
Then $K_{(1,3,5)}$ is a subcomplex of $\Gamma$.
Clearly, $\w H^0(\Gamma)\cong \mathbb{Z}$, denote by $c_1$ a generator of it. Let $L=\Gamma*K_{(2,4)}$. Denote by $c_2$ a generator of
$\w H^0(K_{(2,4)})\cong \mathbb{Z}$, then an easy calculation shows that (see Remark \ref{rem:1}) $c_1*c_2$ is a generator of
$\w H^1(L)\cong \mathbb{Z}$. Let $J'=(1,2,3,4,5)$. Then $K_{J'}$ is a subcomplex of $L$, and the inclusion map induces
a monomorphism $\w H^1(L)\xrightarrow{\mu}\w H^1(K_{J'})$ (actually, $\mu(H^1(L))$ is a direct summand of $H^1(K_{J'})$).
There is a commutative diagram
\[\begin{CD}
\w H^0(\Gamma)\otimes\w H^0(K_{(2,4)})@>\eta >>\w H^1(L)\\
@V\phi\otimes\mathrm{id} VV @VV\mu V\\
\w H^0(K_{(1,3,5)})\otimes\w H^0(K_{(2,4)})@>\eta >>\w H^1(K_{J'}),
\end{CD}\]
where $\phi$ is induced by the inclusion map. So $\mu(c_1*c_2)=\phi(c_1)*c_2$ is a generator of $\w H^1(K_{J'})$.
Thus by Poincar\'e duality on $\mathcal {Z}_{K}$,
there is an element $c_0$ of $\w H^{n-2}(K_{\wh{J'}})$ such that $c_0*\phi(c_1)*c_2$ is a generator of
$\w H^n(K)\cong \mathbb{Z}$. On the other hand, let $e_1$ be a generator of $\w H^0(K_{(1,3)})$. Clearly $e_1*c_2$ is a generator
of $\w H^1(K_J)\cong \mathbb{Z}$, so there is a element  $e_0$ of $\w H^{n-2}(K_{\wh J})$ such that $e_0*e_1*c_2=c_0*\phi(c_1)*c_2$.
Since $e_0*e_1,\,c_0*\phi(c_1)\in \w H^{n-1}(K_{\wh{(2,4)}})\cong \mathbb{Z}$, we have $e_0*e_1=c_0*\phi(c_1)$. Since $\mathrm{dim}(\psi(e_1))=3$,
$\mathrm{dim}(\psi\phi(c_1))=4$, and $e_1*\phi(c_1)=0$,
then we get a contradiction by applying the arguments as in the proof of Lemma \ref{lem:2}.

Now suppose $v_1,v_2,v_3\in \wh J$ such that $(v_1,v_2),\,(v_1,v_3)\in MF(K)$. Let $J_0=(v_1,v_2,1,3)$. Then
from the result in the last paragraph we have $K_{J_0}$ is isomorphic to the boundary of a quadrangle. Thus by applying the
same arguments as in the last paragraph, we have that $(v_1,v_3)$ is a simplex of $K$, a contradiction.
\end{proof}
Now let us use the preceding results to complete the proof of Theorem \ref{thm:a}
\begin{proof}[Proof of Theorem \ref{thm:a}]
(a) From the assumption and Theorem \ref{thm:1}, we have that there are $n+1$ elements $c_i\in \w H^{k_i}(K_{J_i})$, $1\leq i\leq n+1$,
such that $\prod_{i=1}^{n+1} c_i\neq0\in \w H^n(K)$
(clearly, $J_i\cap J_k=\emptyset$ for $i\neq k$ and $\bigcup_{i=1}^{n+1}J_i$ is the vertex set of $K$).
From Remark \ref{rem:1}, the cohomology dimention of the class $\prod_{i=1}^{n+1} c_i$ is $n+\sum_{i=1}^{n+1}k_i$. Thus $k_i=0$ for all $1\leq i\leq n+1$.
Combine all the preceding lemmas, we have that $|J_i|=2$ for all $1\leq i\leq n+1$, so $J_i\in MF(K)$, and so $K$ is a subcomplex of
$K_{J_1}*\cdots *K_{J_{n+1}}$. Since $K_{J_1}*\cdots *K_{J_{n+1}}$ is a triangulation of $S^n$ itself,
then $K\cong K_{J_1}*\cdots *K_{J_{n+1}}$, and then the conclusion follows.

(b) Suppose there is a $M_u$ with $q_u\geq [\frac{n}{2}]+2$,
then as in (a) there are $q_u$ elements $c_i\in \w H^{k_i}(K_{J_i})$, $1\leq i\leq q_u$, such that $\prod_{i=1}^{q_u} c_i\neq0\in \w H^n(K)$.
The cohomology dimention of the class $\prod_{i=1}^{q_u} c_i$ is $q_u-1+\sum_{i=1}^{q_u}k_i$,
then from the inequality $q_u\geq [\frac{n}{2}]+2$, there are at least two $k_i$'s with $k_i=0$.
Then $K$ satisfies the conditions in all of the three Lemmas above.
From the first statement of Lemma \ref{lem:3}, we have that for any $a\in \mathcal {H}^{0,*}(K)$, $\mathrm{dim}(\psi(a))=3$.
So there are at least two $S^3$ factors in $M_u$.
From the second statement of Lemma \ref{lem:3}, we have that for any two linear independent element $a_1,a_2\in\mathcal {H}^{0,*}(K)$,
$a_1*a_2\neq 0$. This implies that all $S^3$ factors in the expression of $M$ are in $M_u$. Then there can not be another
$M_v$ with $q_v\geq [\frac{n}{2}]+2$. The conclusion holds.
\end{proof}

\providecommand{\bysame}{\leavevmode\hbox to3em{\hrulefill}\thinspace}
\providecommand{\MR}{\relax\ifhmode\unskip\space\fi MR }
\providecommand{\MRhref}[2]{%
  \href{http://www.ams.org/mathscinet-getitem?mr=#1}{#2}
}
\providecommand{\href}[2]{#2}


\begin{thebibliography}{10}

\bibitem{B02}
I.~V. Baskakov: \emph{Cohomology of $K$-powers of spaces and the combinatorics of
  simplicial divisions}, Uspekhi Mat. Nauk \textbf{57} (2002), no.~5, 147--148
  (Russian), Russian Math. Surveys, \textbf{57}(2002), no. 5, 898-990 (English
  translation).

\bibitem{BBP04}
I.~V. Baskakov, V.~M. Buchstaber and T.~E. Panov: \emph{Cellular cochain
  algebras and torus actions}, Russian Math. Surveys \textbf{59} (2004), no.~3,
  562--563.

\bibitem{BM06}
F.~Bosio and L.~Meersseman: \emph{Real quadrics in $\mathbb{C}^n$, complex manifolds and
  convex polytopes}, Acta Math. \textbf{197} (2006), no.~1, 53--127.

\bibitem{BP00}
V.~M. Buchstaber and T.~E. Panov: \emph{Torus actions, combinatorial topology
  and homological algebra}, Russian Math. Surveys \textbf{55} (2000), no.~5,
  825--921.

\bibitem{BP02}
V.~M. Buchstaber and T.~E. Panov: \emph{Torus actions and their applications in topology and
  combinatorics}, University Lecture Series, vol.~24, Amer. Math. Soc.,
  Providence, R.I., 2002.

\bibitem{BP13}
V.~M. Buchstaber and T.~E. Panov: \emph{Toric topology}, A book project (2013), arXiv:1210.2368.

\bibitem{DJ91}
M.~W. Davis and T.~Januszkiewicz: \emph{Convex polytopes, coxeter orbifolds and
  torus actions}, Duke Math. J. \textbf{62} (1991), no.~2, 417--451.

\bibitem{M06}
M.~Franz: \emph{The integral cohomology of toric manifolds}, Proc. Steklov
  Inst. Math. \textbf{252} (2006), 53--62.

\bibitem{GS67}
B.~Gr\"unbaum and V.~P. Sreedharan: \emph{Enumeration of simplicial
  $4$-polytopes with $8$ vertices}, J. Combinatorial Theory \textbf{2} (1967),
  437--465.

\bibitem{M79}
D.~McGavran: \emph{Adjacent connected sums and torus actions}, Trans. Amer.
  Math. Soc. \textbf{251} (1979), 235--254.

\bibitem{P08}
T.~Panov: \emph{Cohomology of face rings, and torus actions}, Surveys in
  Contemporary Mathematics, London Math. Soc. Lecture Note Series, vol. 347,
  Cambridge, U.K., 2008, pp.~165--201.

\bibitem{WZ13}
X.~Wang and Q.~Zheng: \emph{The homology of simplicial complements and the
  cohomology of polyhedral products}, to appear on Forum Math. (2013),
  arXiv:1006.3904v3.

\bibitem{Y99}
S.~Yuzvinsky: \emph{Taylor and minimal resolutions of homogeneous polynomial
  ideals}, Math. Res. Lett. \textbf{6} (1999), no.~5-6, 779--793,
  arXiv:math/9905125.

\end{thebibliography}
\end{document}